\documentclass[12pt]{article}
\usepackage{amsmath,amssymb,color,amscd}

\marginparwidth 40pt
 \textwidth=140mm
 \textheight=206mm

\def\seq#1#2#3{#1_{#2},\,\ldots,#1_{#3}}
\def\abs#1{\vert#1\vert}
\def\w{\widetilde}
\def\h{\widehat}
\def\bb{\overline}
\def\Ker{\text{Ker\,}}

\def\vv{{\underline{v}}}
\def\nuv{{\underline{\nu}}}
\def\nunu{\underline{\nu}}
\def\tt{{\underline{t}}}
\def\ww{\underline{w}}
\def\WW{\underline{W}}
\def\mm{\underline{m}}

\def\1{\underline{1}}
\def\P{\mathbb P}

\def\Z{\mathbb Z}
\def\Q{\mathbb Q}
\def\C{\mathbb C}

\def\OO{{\cal O}}
\def\XX{{\cal X}}
\def\DD{{\cal D}}

\def\oD{\stackrel{\circ}{\cal D}}
\def\oE{\stackrel{\circ}{E}}

\newtheorem{theorem}{Theorem}
\newtheorem{statement}{Statement}

\newtheorem{proposition}{Proposition}

\newenvironment{definition}
{\smallskip\noindent{\bf Definition\/}:}{\smallskip\par}
\newenvironment{example}
{\smallskip\noindent{\bf Example\/}.}{\smallskip\par}

\newenvironment{remark}
{\smallskip\noindent{\bf Remark\/}.}{\smallskip\par}

\newenvironment{proof}
{\noindent{\bf Proof\/}.}{{ $\Box$}\smallskip\par}

\title{An equivariant Poincar\'e series
of filtrations and monodromy zeta functions\footnote{Math. Subject Class. 14B05, 13A18,
16W22, 16W70.
Keywords: finite group actions, filtrations,
Poincar\'e series, monodromy zeta functions,  plane valuations.}
}

\author{
A.~Campillo,
\and F.~Delgado,\thanks{Supported by the grants
MTM2012-36917-C03-01 / 02
(both grants with the help of FEDER Program).} \and S.M.~Gusein-Zade
\thanks{
Supported by the
grants RFBR--13-01-00755, NSh--5138.2014.1.
} }

\date{}
\begin{document}
\def\eps{\varepsilon}

\maketitle

\begin{abstract}
We define a new equivariant (with respect to a finite group $G$ action) version
of the Poincar\'e series of a multi-index filtration as an element of the power
series ring ${\widetilde{A}}(G)[[t_1, \ldots, t_r]]$ for a certain modification
${\widetilde{A}}(G)$ of the Burnside ring of the group $G$. We give a formula
for this Poincar\'e series of a collection of plane valuations in terms
of a $G$-resolution of the collection. We show that, for
filtrations on the ring of germs of functions in two variables defined
by the curve valuations corresponding to the irreducible
components of a plane curve singularity defined by a $G$-invariant function germ,
in the majority of cases this equivariant Poincar\'e series determines the
corresponding equivariant monodromy zeta functions defined earlier.
\end{abstract}

\section*{Introduction}\label{sec0}
It was shown (see, e.g., \cite{IJM}) that the multi-variable Poincar\'e series
of the multi-index filtration on the ring of germs of functions in two variables defined
by the curve valuations corresponding to the irreducible components of a (reducible) plane
curve singularity $C$ coincides with the multi-variable Alexander polynomial of the
corresponding algebraic link $C\cap S^3_\eps\subset S^3_\eps$. Up to now this coincidence
has no conceptual explanation. It is obtained by direct computations of the both objects
in the same terms and comparison of the results. Generalizations of this relation (e.g.,
to an equivariant setting) can help to understand a reason for it. A possibility to try
to get an equivariant generalization of this relation was restricted by the lack of
equivariant versions of the Poincar\'e series of filtrations and of the monodromy zeta
functions.

There were several attempts to define equivariant versions of the Poincar\'e series
and of the monodromy zeta functions for a finite group $G$ action:
\cite{MMJ}, \cite{RMC}, \cite{GLM}, \cite{GLM-zeta2}.
In particular, in \cite{MMJ} an equivariant version of the Poincar\'e series
was defined as an element of $R_1(G)[[t_1, \ldots, t_r]]$, where $R_1(G)$ is the subring of the ring of representations
of the group $G$ generated by the one-dimensional representations.
This version appeared to be useful for computation of Seiberg-Witten invariants
of the links of the so-called splice-quotient surface singularities (see, e.g., \cite{Nemethi}).
However a comparison of the equivariant versions of the Poincar\'e series
and of the monodromy zeta functions
was difficult since they were elements of different rings. The
notion of the (usual, non-equivariant) Poincar\'e series is
related to the notion of integration with respect to the Euler
characteristic: see, e.g. \cite{IJM}.
An ``equivariant version''
of the ring $\Z$ of integers is the Burnside ring $A(G)$ of the group $G$, i.e.
the Grothendieck ring of finite $G$-sets (see, e.g., \cite{TtD}). In some approaches
an equivariant version of the Euler characteristic is an element of the Burnside ring.
In \cite{GLM} and \cite{GLM-zeta2} equivariant versions of the monodromy zeta function
were defined as elements of the power series rings $(A(G)\otimes \Q)[[t_1, \ldots, t_r]]$
and $A(G)[[t_1, \ldots, t_r]]$ respectively.

Here we define a new equivariant version $P_{\{\nu_i\}}^G(t_1, \ldots, t_r)$
of the Poincar\'e series as an element of $\widetilde{A}(G)[[t_1, \ldots, t_r]]$
for a certain modification $\widetilde{A}(G)$ of the Burnside ring
($\{\nu_i\}$ is a collection of order functions on the ring of
germs of analytic functions of a complex analytic space).
A reduction
of this series is an element of $A(G)[[t_1, \ldots, t_r]]$. We give a formula for the
equivariant Poincar\'e series $P_{\{\nu_i\}}^G(t_1, \ldots, t_r)$ of a collection
$\{\nu_i\}$ of plane valuations
in terms of a $G$-resolution of the collection. This formula uses a
pre-$\lambda$-structure on the ring $\widetilde{A}(G)$. We also
correct a certain inaccuracy
in the corresponding formula in \cite{RMC} (and of its use in \cite{Arkiv}).
We show that (with some exceptions) the equivariant Poincar\'e series of a collection of
curve valuations corresponding to the irreducible components of a plane curve singularity
defined by a $G$-invariant equation determines the corresponding equivariant monodromy
zeta-functions from \cite{GLM} and \cite{GLM-zeta2} by a simple algorithm.

\section{The equivariant Poincar\'e series
}\label{sec1}

We shall consider the Grothendieck ring of finite $G$-sets with an additional
structure.

\begin{definition}
A finite {\it equipped} $G$-{\it set} is a pair $\w{X}=(X, \alpha)$ where:
\begin{enumerate}
\item[$\bullet$] $X$ is a finite $G$-set;
\item[$\bullet$] $\alpha$ associates to each point $x\in X$ a
one-dimensional representation
$\alpha_x$ of the isotropy subgroup $G_x=\{a\in G : ax=x\}$ of the point $x$ so that,  for
$a\in G$, one has $\alpha_{ax}(b)=\alpha_x(a^{-1}ba)$, where
$b\in G_{ax}=a G_x a^{-1}$.
\end{enumerate}
\end{definition}

Let $\w{A}(G)$ be the Grothendieck group of finite equipped $G$-sets (with respect to
the disjoint union of equipped $G$-sets). The cartesian product $\w{X}\times\w{Y}$ of
two equipped $G$-sets $\w{X}=(X, \alpha)$ and $\w{Y}=(Y,\beta)$ is the pair
$(X\times Y, \gamma)$, where $X\times Y$ is the cartesian product of the $G$-sets $X$
and $Y$ with the diagonal $G$-action and
$\gamma_{(x,y)}(b) = \alpha_x(b)\beta_y(b)$ for $b\in G_{(x,y)}=G_{x}\cap G_y$.
The cartesian product defines a ring structure on $\w{A}(G)$.
The class of an equipped $G$-set $\w X$ in the Grothendieck ring $\w{A}(G)$ will be
denoted by $[\w{X}]$. The unit $1$ in the ring $\w{A}(G)$ is represented by the
one-point $G$-set $G/G$
with the trivial representation of (the isotropy subgroup) $G$ associated to it.
As an Abelian group $\widetilde{A}(G)$ is freely generated by the classes of the
irreducible equipped $G$-sets $[G/H]_{\alpha}$ for all the conjugacy classes $[H]$
of subgroups of $G$ and for all one-dimensional representations $\alpha$ of $H$
(a representative of the conjugacy class $[H]\in \mbox{Conjsub\,}G$).

\begin{example}
The Burnside ring $A(\Z_2)$ of the cyclic group of order $2$ is the free abelian
group generated by the classes $1=[\Z_2/\Z_2]$
 and $[\Z_2/(e)]$. The ring $\w{A}(\Z_2)$ is the free
abelian group generated the classes 1, $[\Z_2/\Z_2]_{\sigma}$ and $[\Z_2/(e)]$ where
the class $[\Z_2/\Z_2]_{\sigma}$ is represented by the one-point $\Z_2$-set
$\Z_2/\Z_2$ with the nontrivial representation of $\Z_2$ associated to the point. The
multiplication is defined by
$[\Z_2/\Z_2]_{\sigma}\cdot [\Z_2/(e)]=[\Z_2/(e)]$.
\end{example}

Let $R_1(G)$ be the subring of the ring $R(G)$ of representations
of the group $G$ generated by the one-dimensional
representations. (If the group $G$ is Abelian one has
$R_1(G)=R(G)$.)
There are natural homomorphisms from the ring $\w{A}(G)$ to the
rings $A(G)$,
$\Z$ and to $R_1(G)$ respectively. The reduction
$\rho: \w{A}(G)\to A(G)$ is defined by forgetting the representation corresponding to
the points. The reduction $\hat{\rho}:\w{A}(G)\to \Z$ is defined by forgetting the
representations and the $G$-action. (Thus one gets an element of the Grothendieck
ring of finite sets isomorphic to $\Z$.)
The homomorphism $\varepsilon: \w{A}(G)\to R_1(G)$ is
defined in the following way.
For an equipped $G$-set $\w{X}=(X,\alpha)$, let $X^G$ be the set of fixed points of the
$G$-action. For each point $x\in X^G$, $\alpha_x$ is a one-dimensional representation
of the group $G$ (coinciding with the isotropy subgroup). Thus one gets a finite set
with a one-dimensional representation of $G$ associated to each point. The
Grothendieck ring of such sets is $R_1(G)$. One define $\varepsilon([\w{X}])$ as
$[(X^G,\alpha_{\vert X^G})]\in R_1(G)$.

\begin{remark}
In \cite{RMC} there were defined a notion of a (``locally finite") $(G,r)$-set and
the corresponding Grothendieck ring $K_0((G,r)-\mbox{sets})$. One can see that a
finite
$(G,r)$-set is an equipped $G$-set with an additional structure (a $\Z^r_{\ge
0}$-valued function on it). Therefore each finite $(G,r)$-set defines a finite
equipped $G$-set.
\end{remark}

A pre-$\lambda$-structure on a ring $R$ is a map from $R$ to
$1 + tR[[t]]$ ($u\mapsto \lambda_u(t)$, $u\in R$) such that
$\lambda_{u+v}(t)=\lambda_u(t) \lambda_{v}(t)$: see e.g.
\cite{Lambda}. In what follows we shall use a
(particular) pre-$\lambda$-structure  on the ring $\w{A}(G)$. Since $\lambda_1(t)$
will be equal to $(1-t)^{-1} = 1+t + t^2 +\cdots $, we shall denote
$\lambda_u(t)$ by $(1-t)^{-u}$.
We shall define
\begin{equation}\label{lambda}
\lambda_{[\w X]}(t) =
(1-t)^{-[\w{X}]} : = 1 + [\w{X}]t + [S^2\w{X}]t^2 + \cdots\,,
\end{equation}
where $S^k\w X$ are the symmetric powers of the equipped $G$-set $\w X$ defined
below.

For an equipped $G$-set $\w{X}=(X,\alpha)$,
its symmetric power $S^k \w X$ is an equipped $G$-set described in the following
way. It is the pair $(S^k X, \alpha^{(k)})$ where $S^kX=X^k/S_k$ is the $k$-th
symmetric power of the
$G$-set $X$ (with the natural action of $G$ on it). Let an unordered collection
$\{\seq x1k\}$ represent a point of $S^kX$. One can write it as
$\{\mu_1 y_1, \ldots, \mu_s y_s\}$ where $\seq y1s$ are different points from the
collection $\seq x1k$ and $\mu_i$ are their multiplicities,
$\sum \mu_i=k$. The isotropy subgroup of
the point $\{\seq x1k\}\in S^kX$ consists of those elements $a\in G$ which act on the
set $\seq y1s$ by a permutation preserving the multiplicities of the points.
Let $(\seq{y}{i_1}{i_\ell})$ be a cycle of the permutation defined by the action of $a$ on
$\{\seq y1s\}$. The multiplicities
$\seq{\mu}{i_1}{i_\ell}$ are equal to each other. Let us define
$\beta(\seq{y}{i_1}{i_\ell})$ as $\alpha_{y_{i_1}}(a^{\ell})$. Now
$\alpha^{(k)}_{\{x_i\}}(a)$ is the product
of the factors
$(\beta(\seq{y}{i_1}{i_\ell}))^{\mu_{i_1}}$
over all the cycles
$(\seq{y}{i_1}{i_\ell})$ in the permutation defined by $a$.

\begin{remark}
The reason for this definition is inspired by the application of this notion below
and is the following one. One has to explain it for an irreducible
equipped $G$-set: the $k$-th symmetric power of the disjoint
union of two equipped
$G$-sets
$\w{X}'=(X',\alpha')$ and $\w{X}''=(X'',\alpha'')$ is the disjoint union of the products
$S^i\w{X}'\times S^{k-i}\w{X}''$ over all $i=0, 1, \ldots, k$. Assume that the group $G$
acts on a germ $(V,0)$ of a complex analytic space (and thus on the ring $\OO_{V,0}$
of germs of functions on it and on its projectivization
$\P\OO_{V,0}$) and that an irreducible
equipped $G$-set $\w{X}=(X,\alpha)$ is represented by the orbit $Gh$ of a function germ
$h\in \P\OO_{V,0}$ (or rather of its class there). Let $G_h$ be the isotropy subgroup of
$h$ in $\P\OO_{V,0}$. This means that the $G$-set  $X=Gh\cong
G/G_h$ consists of $\vert G\vert/\vert G_h\vert$
points represented by a set of function germs, for an element $a\in G$ and for a function $h'$
from the set of function germs representing the orbit of $h$, the function $a^*h'$ is, up to a
constant factor, another function-germ from this set, and, for $a\in G_{h'}$,
$a^*h'=\alpha_{h'}(a)h'$. Let us consider all functions of the form $h_1\cdot
h_2\cdot\ldots\cdot h_k$, where $h_1$, $h_2$, \dots $h_k$
are functions from the set representing $Gh$. In the application below two products of this sort
can coincide in $\P\OO_{V,0}$ if and only if they consist of
the same functions (in an arbitrary order,
of course). (This will be the case since all the functions from the orbit will have different
zero-level curves.) In this case this set of functions is, in the natural way, isomorphic to the
$k$-th symmetric power $S^kX$ of the $G$-set $X$.
Let us write a product of the functions representing $Gh$ as
$\{h^*=h_1^{\mu_1}\cdot\ldots\cdot h_s^{\mu_s}\}$ where $\seq h1s$ are different
functions from the
set representing $Gh$ and $\mu_i$ are their multiplicities. The isotropy subgroup of
the function $h^*$ in $\P\OO_{V,0}$ consists of those elements $a\in G$ which act on the
set $\seq h1s$ by a permutation preserving the multiplicities of the functions.
The function $h^*$ is the product of the functions corresponding to cycles of the permutation.
Let $(\seq{h}{i_1}{i_\ell})$ be a cycle of the permutation defined by $a$. The
multiplicities $\seq{\mu}{i_1}{i_\ell}$ of these functions in the product are equal to each other.
The element $a$ acts on the product $\prod\limits_{j=1}^{\ell}h_{i_j}$
by the multiplication by $\alpha_{h_{i_1}}(a^{\ell})$. Thus the element $a$ acts on
the function $h^*$
by the multiplication by the product
of the factors
$(\alpha_{h_{i_1}}(a^{\ell}))^{\mu_{i_1}}$
over all the cycles
$(\seq{y}{i_1}{i_\ell})$ in the permutation defined by $a$.
\end{remark}

From the definition (\ref{lambda})
one can see that
$$
(1-t)^{-[\w{X}\cup \w{Y}]} = (1-t)^{-[\w{X}]} (1-t)^{-[\w{Y}]}\; .
$$
Therefore this definition in the obvious way extends to the Grothendieck ring
$\w{A}(G)$ defining a pre-$\lambda$-structure on it.

\begin{example}
For $G=\Z_2$,
\begin{equation*}
\begin{aligned}
(1-t)^{[\Z_2/\Z_2]_{\sigma}} &= 1 + [\Z_2/\Z_2]_{\sigma} t + t^2 +
[\Z_2/\Z_2]_{\sigma} t^3 + \cdots  = \frac{1 +[\Z_2/\Z_2]_{\sigma}t}{1-t^2}\,,
\\
(1-t)^{[\Z_2/(e)]} & = 1 + [\Z_2/(e)] t + ([\Z_2/(e)]+1)t^2 + \cdots \\
& \qquad \qquad \cdots +
(k [\Z_2/(e)]+1)t^{2k} + (k+1)[\Z_2/(e)]t^{2k+1} +
\cdots \\
& = \frac{1}{1-t^2} + \frac{t}{(1-t)(1-t^2)}[\Z_2/(e)]\;.
\end{aligned}
\end{equation*}
\end{example}

Let $(V,0)$ be a germ of a complex analytic space
with an action of a finite group $G$ and let $\OO_{V,0}$ be the ring of germs of
functions on it. Without loss of generality we assume that the $G$-action on $(V,0)$
is faithful. The group $G$ acts on $\OO_{V,0}$ by
$$
a^* f(z)=f(a^{-1}z)
$$
where $z\in V$ and $a\in G$.

A valuation $\nu$ on the ring $\OO_{V,0}$
is a function $\nu: \OO_{V,0}\to \Z_{\ge 0}\cup \{+\infty\}$ such that:
\begin{enumerate}
\item[1)] $\nu(\lambda f) = \nu (f)$ for $\lambda\in \C^{*}$;
\item[2)] $\nu(f+g)\ge \min \{\nu(f), \nu(g)\}$;
\item[3)] $\nu(fg)= \nu(f)+\nu(g)$.
\end{enumerate}
A function $\nu:\OO_{V,0}\to \Z_{\ge 0}\cup \{+\infty\}$ which
possesses the properties 1)
and 2) is called an {\it order function}. A collection of order
functions $\seq{\nu}1r$ on
$\OO_{V,0}$ defines an $r$-index filtration on $\OO_{V,0}$:
$$
J(\vv) = \{h\in \OO_{V,0} : \nuv(h)\ge \vv\}\; ,
$$
where $\vv=(\seq v1r)\in \Z_{\ge 0}^{r}$,
$\nuv(h) = (\nu_1(h), \ldots, \nu_r(h))$ and
$\vv'=(\seq{v'}1r)\ge \vv'' =(\seq{v''}1r)$ if and only if
$v'_i\ge v''_i$ for all $i$.

The notion of the Poincar\'e series $P_{\{\nu_i\}}(\tt)$ of the filtration
$\{J(\vv)\}$ (or of the collection $\{\nu_i\}$ of order functions)
was introduced in \cite{CDK}. In \cite{IJM} it was shown that it is
equal to a certain integral with respect to the Euler characteristic over the
projectivization $\P\OO_{V,0}$ of the space $\OO_{V,0}$. The coefficient at
$\tt^{\vv}$ ($\tt=(t_1, \ldots, t_r)$, $\vv=(v_1, \ldots, v_r)$,
$\tt^{\vv}=t_1^{v_1}\cdot\ldots\cdot t_r^{v_r}$) in the Poincar\'e series $P_{\{\nu_i\}}(\tt)$
is equal to the Euler characteristic of the set of function germs $h\in \P\OO_{V,0}$
such that $\nunu(h)=\vv$.

One of the problems to define an equivariant version of the Poincar\'e series is
related to the fact
that an order function $\nu$ on $\OO_{V,0}$ is, in general, not invariant with respect to the $G$-action.
Therefore a $G$-orbit of a function does not correspond to a well-defined monomial
of the form $\tt^{\vv}$.
One can restrict oneself to only $G$-equivariant order functions on $\OO_{V,0}$. However this
makes the construction rather poor. Instead of that one can associate to an orbit the sum of values
of the $G$-shifts of the order function, i.e. the sum of the values of the order functions
$a^*\nu$ for $a\in G$. This leads to a meaningful notion. E.g., if $\nu_1$, \dots, $\nu_r$
are the curve valuations defined by irreducible plane curve singularities $(C_i, 0)\subset(\C^2,0)$
(see Section~2) and $\mu(h):=\nu_1(h)+\ldots+\nu_r(h)$ then the integral $\int_{\P\OO_{V,0}}t^{\mu(h)}d\chi$
is equal to $\Delta^C(t, \ldots, t)$, where $\Delta^C$ is the Alexander polynomial (in several
variables) of the link of the curve singularity $C=\bigcup_{i=1}^r C_i$, which in its turn
coincides with the monodromy zeta function of an equation of the curve $C$.

The group $G$ acts on the set of (order) functions on
$\OO_{V,0}$. Let $\seq{\nu}1r$ be a collection of order functions
on $\OO_{V,0}$. Let $\omega_i:\OO_{V,0}\to \Z_{\ge 0}\cup
\{+\infty\}$ be defined by $\omega_i=\sum_{a\in G}a^* \nu_i$. The
functions $\omega_i$ are $G$-invariant.

The functions $\omega_i$ are not, in general, order functions.
Assume that the order functions $\seq{\nu}1r$ are finitely determined. This means
that,  for each $k\in\Z_{\ge 0}$, the set $\{h\in \OO_{V,0}:
\nu_i(h)=k\}$
is cylindric in the sense of \cite{IJM}. For an element $h\in
\P\OO_{V,0}$, that is
for a function germ considered up to a constant factor, let $G_h$ be the isotropy
subgroup $G_h=\{a\in G: a^*h=\alpha_h(a)h\}$
($a\mapsto \alpha_h(a)\in\C^*$ determines
a one-dimensional representation of the subgroup $G_h$) and let
$Gh\cong G/G_h$ be the orbit
of $h$ in $\P\OO_{V,0}$. Let $\widetilde{X}_h$ be the element of
the ring $\widetilde{A}(G)$
represented by the $G$-set $Gh$ with the representation
$\alpha_{a^*h}$ associated to the point $a^*h\in Gh$ ($a\in G$).
The correspondence $h\mapsto \widetilde{X}_h$ defines a function
($\widetilde{X}$) on $\P\OO_{V,0}/G$
with values in $\widetilde{A}(G)$.

The usual (non-equivariant) Poincar\'e series of a collection of order functions $\{\nu_i\}$
is the integral with respect to the Euler characteristic over the projectivization $\P\OO_{V,0}$
of the function $\tt^{\nunu(h)}$ with values in $\Z[[t_1, \ldots, t_r]]$. The equivariant
version will be defined as the integral over $\P\OO_{V,0}/G$ of the function
$\widetilde{X}_h\tt^{\underline{\omega}(h)}$ with values in $\widetilde{A}(G)[[t_1, \ldots, t_r]]$:
see below. However one has to make the notion of integration with respect to the Euler characteristic
over $\P\OO_{V,0}/G$ more precise. The reason is that the function $\widetilde{X}_h\tt^{\underline{\omega}(h)}$
(or rather $\widetilde{X}_h$ itself) is not cylindric: the condition that $G_h=H$ is not determined
by a jet of the germ $h$ of any order. Therefore one has to change the notion of measurable subsets
of $\P\OO_{V,0}/G$ (i.e., of those subsets for which the Euler characteristic is defined) a little bit.

The quotient $\P\OO_{V,0}/G$ is decomposed into the disjoint parts $(\P\OO_{V,0}/G)^{[H], \alpha}$
for all the conjugacy classes $[H]$ of subgroups of $G$ and all the one-dimensional representations
$\alpha$ of $H$, where $(\P\OO_{V,0}/G)^{[H], \alpha}$ consists of the functions $h\in\P\OO_{V,0}/G$
with the isotropy subgroup $G_h$ conjugate to $H$  and, for those of them with $G_h=H$, the corresponding
one-dimensional representation of $H$ is $\alpha$. Let $\overline{(\P\OO_{V,0}/G)^{[H], \alpha}}$
be the set of $h\in\P\OO_{V,0}/G$ such that $H$ is conjugate to a subgroup of $G_h$ and,
for those of them with $H\subset G_h$, the restriction of the corresponding one-dimensional representation
of $G_h$ to $H$ coincides with $\alpha$. (The set $\overline{(\P\OO_{V,0}/G)^{[H], \alpha}}$ is
in some sense the closure of $(\P\OO_{V,0}/G)^{[H], \alpha}$.) In the usual way one can define
measurable (i.e., cylindric) subsets of $\overline{(\P\OO_{V,0}/G)^{[H], \alpha}}$.
Now a subset $A$ of $(\P\OO_{V,0}/G)^{[H], \alpha}$ will be called measurable if it is the
intersection with $(\P\OO_{V,0}/G)^{[H], \alpha}$ of a cylindric subset $B$ in
$\P\OO_{V,0}/G  \left( =\overline{(\P\OO_{V,0}/G)^{[(e)], 1}}\;\right)$.
Its measure
(the Euler characteristic) is defined by the following recurrent equation
$$
\chi(A)=\chi\left(B\cap \overline{(\P\OO_{V,0}/G)^{[H], \alpha}}\right)-
\sum\chi\left(B\cap (\P\OO_{V,0}/G)^{[H'], \alpha'}\right)\,,
$$
where the last sum is over all the conjugacy classes $[H']\in\mbox{Conjsub\,}G$ such that
$H\subset H'$, $H\ne H'$ and $\alpha'_{\vert H}=\alpha$.
(This equation is a recurrent one since it assumes that the measures (the Euler characteristics)
of subsets of $(\P\OO_{V,0}/G)^{[H'], \alpha'}$ with $H'\supset H$, $H'\ne H$ are already defined.

\begin{definition}
 The {\it equivariant Poincar\'e series} $P^G_{\{\nu_i\}}(\tt)$ of the collection $\{\nu_i\}$
 is defined by
 \begin{equation}\label{main_definition}
 P^G_{\{\nu_i\}}(\tt)=\int_{\P\OO_{V,0}/G}
\widetilde{X}_h\tt^{\underline{\omega}(h)} d\chi\in
 \widetilde{A}(G)[[t_1, \ldots, t_r]]\; ,
 \end{equation}
where $\tt^{\underline{\omega}(h)}=t_1^{\omega_1(h)}\cdot\ldots\cdot
t_r^{\omega_r(h)}$, $t_i^{+\infty}$ should be regarded as 0.
\end{definition}

In other words
$$
P^G_{\{\nu_i\}}(\tt)=
\sum
\chi(\{h\in \P\OO_{V,0}: \underline{\omega}(h) = \ww, \widetilde{X}_h=[G/H]_{\alpha}\}/G)
[G/H]_{\alpha}\tt^{\ww}\;,
$$
where the sum is over all the conjugacy classes $[H]$ of subgroups in $G$, all the
one-dimensional representations $\alpha$ of $H$ and $\ww\in\Z_{\ge0}^r$.

Applying the reduction homomorphism $\rho:\widetilde{A}(G)\to A(G)$ to the Poincar\'e
series $P^G_{\{\nu_i\}}(\tt)$, i.e. to its coefficients, one gets the series
$\rho P^G_{\{\nu_i\}}(\tt)\in A(G)[[t_1, \ldots, t_r]]$, i.e. a power series
with the coefficients from the (usual) Burnside ring.
Applying the homomorphism $\widehat{\rho}:\widetilde{A}(G)\to \Z$ and
$\eps:\widetilde{A}(G)\to R_1(G)$,
one gets the series $\widehat{\rho} P^G_{\{\nu_i\}}(\tt)\in \Z[[t_1, \ldots, t_r]]$
and  $\eps P^G_{\{\nu_i\}}(\tt)\in R_1(G)[[t_1, \ldots, t_r]]$
respectively.

\begin{statement}
 One has
$$
\widehat{\rho} P^G_{\{\nu_i\}}(\tt)=P_{\{a*\nu_i\}}(t_1, \ldots, t_1, t_2, \ldots,
t_2,
\ldots, t_r, \ldots, t_r)\,,
 $$
 where $P_{\{a*\nu_i\}}(\cdot)$ is the usual (non-equivariant) Poincar\'e series of
the collection
 of $\vert G\vert r$ order functions $\{a*\nu_1, a*\nu_2,\ldots, a*\nu_r\vert a\in
G\}$
(each group
 of equal variables in $P_{\{a*\nu_i\}}$ consists of $\vert G\vert$ of them).
One has
 $$
 \eps P^G_{\{\nu_i\}}(\tt)=P^G(t_1^{\vert G\vert}, t_2^{\vert G\vert}, \ldots, t_r^{\vert G\vert})\,,
 $$
 where $P^G(\tt)$ is the equivariant Poincar\'e series defined in \cite{MMJ} (actually only
 for divisorial and curve valuations on $\OO_{\C^2,0}$) as an element of
the power series ring
 $R_1(G)[[t_1, t_2, \ldots, t_r]]$.
\end{statement}

In \cite{RMC} an equivariant version of the Poincar\'e series was defined not as a power series,
but as an element of a rather big (and sophisticated) Grothendieck ring of so-called
locally finite
$(G,r)$-sets ($G$-sets with an additional structure).

A {\it locally finite} $(G,r)$-{\it set} is a triple $(X, \vv, \alpha)$ where
\begin{itemize}
\item $X$ is a $G$-set;
\item $\vv$ is a function on $X$ with values in
$\Z^r_{\ge 0}$;
\item $\alpha$ associates to each point $x\in X$ a one-dimensional representation $\alpha_{x}$ of the isotropy
subgroup $G_x= \{ a\in G : ax =x\}$ of the point $x$;
\end{itemize}
satisfying the following conditions:
\begin{enumerate}
\item[1)] $\alpha_{ax}(b)=\alpha_x (a^{-1}ba)$ for $x\in X$, $a\in G$, $b\in G_{ax}=a G_x a^{-1}$;
\item[2)] for any $\vv\in \Z^r_{\ge 0}$ the set $\{x\in X:
\vv(x)\le \vv\}$ is finite.
\end{enumerate}

The equivariant version of the Poincar\'e series in \cite{RMC} is a virtual locally finite $(G,r)$-set
$P^G_{\{\nu_i\}}=(X, \vv, \alpha)$. Consider the function
$\underline{\omega}$ on $X$ with values in $\Z^r_{\ge 0}$ defined
by
$$
\underline{\omega}(x)=\sum_{a\in G}\vv(ax)\,.
$$
This function is $G$-invariant and thus is constant on irreducible components of the $(G,r)$-set.
For $\ww\in\Z_{\ge 0}^r$, let $X_{\ww}=\{x\in X:
\underline{\omega}(x)=\ww\}$. The $G$-set $X_{\ww}$ with the
representations $\alpha_x$ associated to its points is a finite equipped $G$-set
$\widetilde{X}_{\ww}=(X_{\ww},\alpha_{\vert X_{\ww}})$. One can see that the equivariant
Poincar\'e series (\ref{main_definition}) is equal to
$$
P^G_{\{\nu_i\}}(\tt)=\sum_{\ww\in\Z_{\ge
0}^r}[(X_{\ww},\alpha_{\vert X_{\ww}})]\tt^{\ww}\,.
$$
Thus the equivariant version of the Poincar\'e series from \cite{RMC} determines the
Poincar\'e series considered here.

\section{The equivariant Poincar\'e series for curve
and divisorial valuations on $\OO_{\C^2,0}$}\label{sec2}
Here we write an A'Campo type formula for the equivariant Poincar\'e series mentioned in the
title of the section in terms of a $G$-resolution. We shall treat two types of plane valuations.

Let $(C,0)\subset(\C^2,0)$ be an irreducible germ of a plane curve and let
$\varphi: (\C,0)\to(\C^2,0)$
be a parametrization (uniformization) of it (i.e. $\mbox{Im\,}\varphi=(C,0)$ and
$\varphi$ is an isomorphism between $(\C,0)$ and $(C,0)$ outside of the origin). For
$h\in\OO_{\C^2,0}$, let
$h(\varphi(\tau))= c \tau^{\nu(h)}+\mbox{\,terms of higher
degree}$, where $c\ne0$, $\nu(h)\in\Z_{\ge0}$.
(If $h(\varphi(\tau))\equiv 0$, one defines $\nu(h)$ as $+\infty$.) The function $\nu$ on
$\OO_{\C^2,0}$ is a valuation: the so-called curve valuation.

Let $\pi: ({\cal X, D})\to (\C^2,0)$ be a modification of the plane $(\C^2,0)$
by a sequence of blowing-ups. Its exceptional divisor ${\cal D}=\pi^{-1}(0)$ is
the union of irreducible components $E_{\sigma}$, $\sigma\in\Gamma$, each of them
is isomorphic to the complex projective line $\C\P^1$. For a component $E_{\sigma}$
of the exceptional divisor, and for $h\in \OO_{\C^2,0}$, let
$\nu_\sigma(h)$ be the order of zero of the lifting $h\circ\pi$ of the function $h$
to the space ${\cal X}$ of the modification along the component $E_\sigma$.
The function $\nu_\sigma$ on $\OO_{\C^2,0}$ is a valuation: the so-called
divisorial valuation (corresponding to the divisor $E_{\sigma}$).

Assume that a finite group $G$ acts on $(\C^2,0)$ (by a representation).
Let $\nu_i$, $i=1, \ldots, r$,
be either a curve or a divisorial valuation on $\OO_{\C^2,0}$. We shall write
$I_0=\{1,2,\ldots,r\}=I'\sqcup I''$, where $i\in I'$ if and only if the corresponding valuation
$\nu_i$ is a curve one. For $i\in I'$, let $(C_i,0)$ be the
plane curve defining the valuation $\nu_i$.

A $G$-{\it equivariant resolution} (or a $G$-{\it resolution} for short) of
the collection $\{\nu_i\}$ of valuations is a proper complex
analytic map $\pi: ({\cal X, D})\to (\C^2,0)$ from a
smooth surface ${\cal X}$ with a $G$-action such that:
\begin{enumerate}
\item[1)]
$\pi$ is an isomorphism outside of the origin in $\C^2$;
\item[2)]
$\pi$ commutes with the $G$-actions on ${\cal X}$ and on $\C^2$;
\item[3)]
the total transform $\pi^{-1}(\bigcup\limits_{i\in I',\, a\in G}
aC_i)$ of the curve $GC=G(\bigcup\limits_{i\in I'} C_i) $
is a normal crossing divisor on ${\cal X}$ (in particular,
the exceptional
divisor ${\cal D}=\pi^{-1}(0)$ is a normal crossing divisor as
well);
\item[4)]
for each branch $C_i$, $i\in I'$, its strict transform
$\widetilde{C}_i$ is a germ of a smooth curve transversal to the
exceptional divisor ${\cal D}$ at a smooth point $x$ of
it and is invariant with respect to the isotropy subgroup
$G_x=\{g\in G: gx=x\}$ of the point $x$;
\item[5)] for each $i\in I''$, the exceptional divisor ${\cal D}=\pi^{-1}(0)$
contains the divisor defining the divisorial valuation $\nu_i$.
\end{enumerate}

A $G$-resolution can be obtained by a $G$-invariant sequence of blow-ups of points.
The condition 4) means, in particular, that $\pi$ is an embedded resolution of the curve
$GC=\bigcup_{i\in I',\, a\in G} aC_i$.

Let ${\stackrel{\circ}{\cal D}}$ be the ``smooth part" of
the exceptional divisor ${\cal D}$ in the total transform
$\pi^{-1}(GC)$ of the curve $GC$, i.e., ${\cal D}$ itself minus
all the intersection points of its components
and all the intersection points with the components of the strict transform of the curve $GC$.
For $x\in {\stackrel{\circ}{\cal D}}$, let $\w{L}_x$ be a germ
of a smooth curve on ${\cal X}$ transversal to
${\stackrel{\circ}{\cal D}}$ at the point $x$ and invariant
with respect to the isotropy subgroup $G_x$ of the point $x$.
The image $L_x=\pi(\w{L}_x)\subset (\C^2,0)$ is called a {\it
curvette} at the point $x$. Let the curvette $L_x$ be given by an
equation $h_x=0$, $h_x\in\OO_{\C^2,0}$. Without loss of generality
one can assume that the function germ $h_x$ is $G$-equivariant.
Moreover we shall assume that the germs $h_x$ associated to different
points $x\in {\stackrel{\circ}{\cal D}}$ are choosen so that
$h_{ax}(a^{-1}z)/h_x(z)$ is a constant (depending on $a$ and $x$).
Also we shall assume that the dependence of the germ $h_x$ on the point $x$
is constructible, i.e. depends on $x$ analytically on $x\in {\stackrel{\circ}{\cal D}}$
except at a finite set of points.

Let $E_\sigma$, $\sigma\in\Gamma$, be the set of all irreducible components of the
exceptional divisor $\cal {D}$ ($\Gamma$ is a $G$-set itself). For $\sigma$ and $\delta$
from $\Gamma$, let $m_{\sigma\delta}:=\nu_{\sigma}(h_x)$, where $\nu_{\sigma}$ is the
corresponding
divisorial valuation, $h_x$ is the germ defining the curvette at a point
$x\in E_{\delta}\cap {\stackrel{\circ}{\cal D}}$. One can show that the matrix $(m_{\sigma\delta})$
is minus the inverse matrix to the intersection matrix $(E_{\sigma}\circ E_{\delta})$
of the irreducible components of the exceptional divisor $\cal {D}$. For $i=1,\ldots,r$,
let $m_{\sigma i}:=m_{\sigma\delta}$, where $\delta$ is the number of the component of $\cal {D}$
corresponding to the valuation $\nu_i$, i.e. either the component defining the valuation
$\nu_i$ if $\nu_i$ is a divisorial valuation (i.e. if $i\in I''$), or the component intersecting
the strict transform of the corresponding irreducible curve $C_i$ if $\nu_i$ is a curve valuation
(i.e. if $i\in I'$). Let $\mm_{\sigma}:=(m_{\sigma 1},\ldots,
m_{\sigma r})\in \Z_{\ge 0}^r$.

Let $\{\Xi\}$ be a stratification of the smooth curve
$\widehat{\cal D}={\stackrel{\circ}{\cal D}}/G$ such that:
\begin{enumerate}
\item[1)] each stratum $\Xi$ is connected;
\item[2)] for each point $\h{x}\in \Xi$ and for each point $x$
from its pre-image $p^{-1}(\h{x})$, the conjugacy class of the
isotropy subgroup $G_x$ of the point $x$ is the same, i.e., depends only on
the stratum $\Xi$.
\end{enumerate}
The latter is equivalent to say that the factorization map
$p: {\stackrel{\circ}{\cal D}} \to \h{\cal D}$ is a covering
over each stratum $\Xi$.

For a point $x\in {\stackrel{\circ}{\cal D}}$, let
$\widetilde{X}_x$ be the
equipped $G$-set defined by $\widetilde{X}_x=\widetilde{X}_{h_x}$, where $h_x$
is the $G_x$-equivariant function defining the choosen curvette at the point $x$ (see above).
The equipped $G$-set $\widetilde{X}_x$ is one and the same for all points $x$ from
the preimage of a stratum $\Xi$ and therefore it defines an element
$[\widetilde{X}_{\Xi}]\in \widetilde{A}(G)$.
Let
\begin{equation}\label{2}
\underline{\omega}_x:=\underline{\nu}(\prod_{a\in G}h_{ax})\,.
\end{equation}
One can see that, for a stratum $\Xi$ of the stratification, $\underline{\omega}_x$
is one and
the same for all points $x$ from the preimage $\pi^{-1}(\Xi)$. We shall denote it by
$\underline{\omega}_{\Xi}$.

\begin{theorem}
 For a collection $\{\nu_i\}$ of curve and divisorial valuations one has
 \begin{equation}\label{ACampo}
 P^G_{\{\nu_i\}} (\tt) =\prod_{\Xi}
(1-\tt^{\underline{\omega}_{\Xi}})^{-\chi(\Xi)[\w{X}_{\Xi}]}\,.
 \end{equation}
\end{theorem}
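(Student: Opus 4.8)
The plan is to compute both sides by reducing to the non-equivariant integral-with-respect-to-Euler-characteristic formula (the A'Campo type formula for the usual Poincaré series, established in \cite{IJM}), organized according to the stratification $\{\Xi\}$ of $\widehat{\cal D}={\stackrel{\circ}{\cal D}}/G$. First I would recall the geometric description of $\P\OO_{\C^2,0}$ that underlies the non-equivariant formula: for a function germ $h$, its behaviour with respect to the valuations $\nu_i$ is governed by where the strict transform of $\{h=0\}$ meets the exceptional divisor ${\cal D}$ after the resolution $\pi$. A generic such $h$ has strict transform meeting ${\stackrel{\circ}{\cal D}}$ transversally at finitely many points, i.e. $\{h=0\}$ is, up to higher-order corrections, a product of curvettes $L_{x_1},\dots,L_{x_k}$ at points $x_j\in{\stackrel{\circ}{\cal D}}$; this is exactly the situation described in the Remark following the definition of symmetric powers. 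The value $\underline\nu(h)$ is then $\sum_j \mm_{x_j}$, and hence the $G$-averaged value is $\underline\omega(h)=\sum_j\underline\omega_{x_j}$, which is constant ($=\underline\omega_\Xi$) when $x_j$ ranges over the preimage of a fixed stratum $\Xi$.

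Second, I would stratify the space over which one integrates according to the ``combinatorial type'' of $h$: which strata $\Xi$ the points of intersection lie over, and with what multiplicities. Fix a function $\Xi\mapsto k_\Xi\in\Z_{\ge0}$ with finitely many nonzero values; the locus of germs $h$ whose intersection data consists of exactly $k_\Xi$ points (counted with multiplicity, unordered) lying over each $\Xi$, together with the higher-order freedom, is a cylindric (constructible) family whose $G$-quotient one must measure. The key point is that the equipped $G$-set attached to such an $h$ — the isotropy subgroup $G_h$ together with the one-dimensional representation $\alpha_h$ — is, by the computation spelled out in the Remark, precisely the class $\prod_\Xi S^{k_\Xi}\widetilde X_\Xi$ in $\widetilde A(G)$, because the functions $h_{x}$ for $x$ over distinct points of ${\stackrel{\circ}{\cal D}}$ have distinct zero-level curvettes, so two products coincide in $\P\OO$ iff they use the same curvettes. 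Summing over all choices $(k_\Xi)$ and using that the Euler characteristic of the quotient of the configuration-type stratum factors as a product over $\Xi$ of the Euler characteristics of (quotients of) symmetric-type configurations on $\Xi$ with the ``higher-order'' fibre contributing a factor of Euler characteristic $1$ (being a cell, as in \cite{IJM}), one obtains
\begin{equation*}
P^G_{\{\nu_i\}}(\tt)=\sum_{(k_\Xi)}\prod_\Xi \chi\big(\{\text{configs of type }k_\Xi\text{ on }\Xi\}/G\big)\,[S^{k_\Xi}\widetilde X_\Xi]\,\tt^{k_\Xi\underline\omega_\Xi}=\prod_\Xi\Big(\sum_{k\ge0}\chi(\ldots)[S^k\widetilde X_\Xi]\tt^{k\underline\omega_\Xi}\Big).
\end{equation*}

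Third, I would identify each factor with $(1-\tt^{\underline\omega_\Xi})^{-\chi(\Xi)[\widetilde X_\Xi]}$. By the very definition \eqref{lambda} of the pre-$\lambda$-structure, $(1-s)^{-[\widetilde X_\Xi]}=\sum_{k\ge0}[S^k\widetilde X_\Xi]s^k$, and more generally $(1-s)^{-\chi(\Xi)[\widetilde X_\Xi]}$ is obtained by the $\chi(\Xi)$-th power (here $\chi(\Xi)\in\Z$, possibly negative, and one uses that $\lambda$ extends multiplicatively, so a negative exponent means the formal inverse power series). So what remains is the purely topological identity that the generating series $\sum_k\chi(\{k\text{-point configs on }\Xi\}/G)s^k$, with the $\widetilde A(G)$-bookkeeping encoded in $[S^k\widetilde X_\Xi]$, equals $(1-s)^{-\chi(\Xi)[\widetilde X_\Xi]}$; over a stratum $\Xi$ the covering ${\stackrel{\circ}{\cal D}}\to\widehat{\cal D}$ is trivial with fibre the $G$-set underlying $\widetilde X_\Xi$, so configurations on $\Xi$ upstairs are configurations on $\Xi\times(\text{that }G\text{-set})$, and the standard fact $\chi(S^k Y)$-generating-function $=(1-s)^{-\chi(Y)}$ (the power-structure / $\lambda$-ring identity for Euler characteristics) lifts to the equipped setting exactly because $S^k$ of a disjoint union distributes as in the Remark. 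The main obstacle, and the step I would write most carefully, is precisely this last lifting: one must check that the equipped-$G$-set structure on the symmetric powers of configurations — the isotropy representations attached to clustered and permuted points — is compatible with the product decomposition over the trivializing cover, i.e. that the combinatorial bookkeeping of $\alpha^{(k)}$ matches the geometry of how $G$ permutes curvettes and acts on products of the $h_x$'s. Everything else (cylindricity of the strata, the cell structure of the ``higher-order'' fibres, additivity of $\chi$) is standard from \cite{IJM} and the setup of Section~1; handling negative $\chi(\Xi)$ just requires working in the completed ring $\widetilde A(G)[[t_1,\dots,t_r]]$, where the relevant power series are invertible since $\underline\omega_\Xi\neq 0$.
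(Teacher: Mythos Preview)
Your outline has a genuine gap at the crucial step. You assert that ``the equipped $G$-set attached to such an $h$ \ldots\ is \ldots\ precisely the class $\prod_\Xi S^{k_\Xi}\widetilde X_\Xi$'', and then treat the higher-order fibre as contributing only a factor of $\chi=1$. But $\widetilde X_h$ is \emph{not} constant on the fibre $I^{-1}(y)$ of the map $I$ sending $h$ to its intersection divisor on $\stackrel{\circ}{\cal D}$. The Remark you invoke applies only to the very special function $h_y=\prod_j h_{x_j}$; a generic $h$ with the same intersection divisor $y$ will typically have isotropy subgroup $G_h$ strictly smaller than $G_y$, because the higher-order terms of $h$ need not respect the symmetry of the curvette configuration. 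So the Fubini step ``(value of $\widetilde X_h$) $\times$ (Euler characteristic of fibre)'' is illegitimate: you are integrating a nonconstant $\widetilde A(G)$-valued function over the cell.

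What the paper does to repair this is exactly the missing idea: for each $y$ one exhibits the distinguished point $h_y\in I^{-1}(y)$ with $\widetilde X_{h_y}=\widehat X(\widehat y)$, and then shows that the complement $I^{-1}(y)\setminus\{h_y\}$ is fibred into $\C^*$-families $h_\lambda=h_y+\lambda(h-h_y)$ along which $\widetilde X_{h_\lambda}$ is constant (the isotropy of $h_\lambda$ equals that of $h$ for all $\lambda\in\C^*$, since $a^*h_\lambda=\alpha(a)h_\lambda$ forces $a^*h=\alpha(a)h$ once $a^*h_y=\alpha(a)h_y$). Because $\chi(\C^*)=0$, these families contribute nothing, and $\int_{I^{-1}(y)}\widetilde X_h\,d\chi=\widehat X(\widehat y)$ as required. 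A secondary point: in your product $\prod_\Xi S^{k_\Xi}\widetilde X_\Xi$ you are implicitly collapsing all divisors of total degree $k_\Xi$ over $\Xi$ to the single class $[S^{k_\Xi}\widetilde X_\Xi]$, but a divisor $\sum_i\ell_i\widehat x_i$ with distinct $\widehat x_i\in\Xi$ actually contributes $\prod_i[S^{\ell_i}\widetilde X_\Xi]$; the correct bookkeeping involves a further sum over partitions, which is what ultimately produces the combinatorial identity $(1+a_1t+a_2t^2+\cdots)^m$ used in the paper.
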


\begin{proof}
Let $Y$ be the configuration space of effective divisors on
$\stackrel{\circ}{\cal D}$. The space $Y$ can be represented as
$$
Y = \bigsqcup_{\{k_{\alpha}\}} \left( \prod_{\sigma\in
\Gamma}S^{k_{\alpha}} \stackrel{\circ}{E}_{\sigma} \right) =
\prod_{\sigma\in \Gamma}\left( \bigsqcup_{k=0}^{\infty}S^k
\stackrel{\circ}{E}_{\sigma}
\right)\; ,
$$
where
$\oE_{\sigma}= E_{\sigma}\cap \oD$,
 $S^k Z = Z^k/S^k$ is the $k$-th symmetric power of the
space $Z$. One has the natural $G$-action on the space $Y$. Let
$\widehat{Y}=Y/G$ be the space of $G$-orbits in $Y$.

For a point $y\in Y$, $y = \sum_{i=1}^{n}x_i$, let $\w X_y$ be
the element of the ring $\w{A}(G)$ defined by
$\w X_y=\w X_{\prod_{i}h_{x_i}}$. This way one gets a
$G$-invariant map
$\w X:Y\to\w{A}(G)$ and thus a map
$\h{X}: \h{Y}\to \w{A}(G)$.

Let $\bb{Y}$ be the configuration space of effective
divisors on
the smooth curve $\h{\cal D}=\stackrel{\circ}{\cal D}/G$. The space $\bb{Y}$
can be represented as
$$
\bb Y = \bigsqcup_{\{k_{\Xi}\}} \left( \prod_{\Xi
}S^{k_{\Xi}} \Xi \right) =
\prod_{\Xi}\left( \bigsqcup_{k=0}^{\infty}S^k
\Xi
\right)\; .
$$

Let $\bb X: \bb Y \to \w A(G)$ be the function on $\bb Y$ defined in
the following way. Let $\bb y = \sum_{i=1}^{s} \ell_{i}\h{x}_i$,
where $\h{x}_i$ are different points from
$\h{\cal D}=\stackrel{\circ}{\cal D}/G$. Then
$$
\bb{X}(\bb y) := \prod_{i=1}^{s} [S^{\ell_i} \w X_{h_{x_i}}] \; ,
$$
where $x_i$ is a point from the preimage $p^{-1}(\h{x}_i)$ of
$\h{x_i}$.

There is a natural map $q: Y\to \bb Y$ which sends a point
$y=\sum_{i=1}^{n} x_i\in Y$ to the point
$\bb y = \sum_{i=1}^{n} \h{x}_i$, where $\h{x}_i=p(x_i)$ is the
orbit of the point $x_i$.
Let $\underline{\omega}_{\bb y} := \sum_{i=1}^{n}
\underline{\omega}_{x_i}$, where
$\underline{\omega}_{x}$ is $\underline{\omega}(h_x)$ (note
that
$\underline{\omega}_{x}$ depends
only on the component of $\stackrel{\circ}{\cal D}$ containing $x$).

The preimage of a point $\bb y\in \bb Y$ in $Y$ can be described in
the following way. Let
$\bb{y} = \sum_{i=1}^{s}\ell_i \h{x}_i$, where $\h{x}_i$ are
different points from $\h{\cal D}= \stackrel{\circ}{\cal D}/G$. Then
$q^{-1}(\bb y)= \prod_{i=1}^{s} S^{\ell_i}(p^{-1}(\h{x}_i))$.
The definition of the one-dimensional representation associated to points of the
symmetric powers of an equipped $G$-set (or rather the explanation of its meaning
after the definition) shows that as an equipped $G$-set
$q^{-1}(\bb y)$ is isomorphic to $\bb X(\bb y)$

Thus one has:
\begin{equation*}
\begin{aligned}
\ & \int\limits_{\bb Y} \bb{X}(\bb y) \tt^{\underline{\omega}_{\bb
y}} d\chi  =
\prod_{\Xi} \left\{
\sum_{k=0}^{\infty} \left(\sum_{\{k_i\}: \sum i k_i =k} \chi\left(
\frac{\Xi^{{\scriptstyle \sum k_i}} \setminus \Delta}{{
\prod\limits_{i}}S_{k_i}}\right)
{\textstyle \prod\limits_{i}} [S^i X_{\Xi}]^{k_i} \right)
\tt^{k \underline{\omega}_{\Xi}}
\right\} \\
&=
\prod_{\Xi} \left\{
\sum_{k=0}^{\infty} \left( \sum_{\{k_i\}
}
\frac{\chi(\Xi)(\chi(\Xi)-1)\cdots (\chi(\Xi)- {\scriptstyle \sum}
k_i +1)}{\prod\limits_i (k_i!)}
{\textstyle \prod\limits_{i}} [S^i X_{\Xi}]^{k_i}
\right)
\tt^{k \underline{\omega}_{\Xi}}
\right\}
\\
& \qquad \qquad = \prod_{\Xi}\left(
1 + [X_{\Xi}]\; \tt^{\underline{\omega}_{\Xi}} + [S^2X_{\Xi}]\;
\tt^{2
\underline{\omega}_{\Xi}}
+
\cdots
\right)^{\chi(\Xi)} \;,
\end{aligned}
\end{equation*}
where $\Delta$ is the big diagonal in $\Xi^{{\scriptstyle \sum k_i}}$, i.e.
the set of points $(x_1, \ldots, x_{\sum k_i})\in \Xi^{{\scriptstyle \sum k_i}}$
with at least two coinciding components $x_j$.
The last equation follows from the well-known equation
\begin{equation*}
\begin{aligned}
1 +
\sum_{k=0}^{\infty}  \left( \sum_{\{k_i\}:  \sum i k_i =k}
\frac{m(m-1)(m-2)\cdots (m-\sum
k_i +1)}{\prod\limits_i (k_i!)}
{\textstyle \prod\limits_{i}} a_i^{k_i}
\right) t ^k  & = \\
= (1 + a_1 t+ a_2t^2 \cdots )^{m}\,, & \qquad \qquad \\
\end{aligned}
\end{equation*}
see, e.g., \cite{stanley}.

According to the described pre-$\lambda$-structure on the ring $\w{A}(G)$ one
has:
\begin{equation*}
\int_{\bb{Y}} \bb{X}(\bb{y}) \tt^{\underline{\omega}_{\bb{y}}} d\chi
=
\prod_{\Xi}
\left( 1-\tt^{\underline{\omega}_{\Xi}}
\right)^{-\chi(\Xi) [X_{\Xi}]}\; .
\end{equation*}

It is sufficient to prove the equation (\ref{ACampo}) modulo the terms
of degree greater than
$\WW$ for an arbitrary $\WW\in \Z_{\ge 0}^r$. We can assume that the resolution
$\pi:(\XX,\DD)\to (\C^2,0)$ is such that for any $h\in
\OO_{\C^2,0}$ with
$\ww(h)\le\WW$,
the strict transform of the curve $\{h=0\}$ intersects the
exceptional divisor
$\DD$ only at points of $\oD$. Such resolution can be obtained from
an arbitrary one by
additional $G$-invariant blowing-ups of intersection points of components of the
total transform $\pi^{-1}(GC)$ of the curve $GC$. These blowing-ups add, to the
stratification $\{\Xi\}$ of $\h{\cal D}$, strata with zero Euler characteristic and
thus do not change the right hand side of (\ref{ACampo}). Let
$\OO^{\WW}_{\C^2,0}= \{h\in \OO_{\C^2,0} : \ww(h)\le \WW
\}\; .
$
Let $I:\P\OO^{\WW}_{\C^2,0}\to Y$ be the map which sends a
function
$h\in \OO^{\WW}_{\C^2,0}$ to the set of intersection points of
the strict transform
of the zero-level curve $\{h=0\}$ with the exceptional divisor
$\DD$ counted with the
corresponding multiplicities. One has the commutative diagram:
$$
\begin{CD}
\P\OO^{\WW}_{\C^2,0} @>I>> Y \\
@VpVV @VVpV \\
\P\OO^{\WW}_{\C^2,0}/G @>\h{I}>> \h{Y}
\end{CD}
$$
The preimages $I^{-1}(y)$ of points from $Y$ are affine subspaces of
$\P\OO^{\WW}_{\C^2,0}$ (see Proposition 2 in \cite{IJM}) and thus have Euler
characteristic equal to $1$. This implies that the Euler characteristics of the
preimages with respect to $\h{I}$ of points of $\h{Y}$ in $\P\OO^{\WW}_{\C^2,0}/G$
are also equal to $1$.

One has $\underline{\omega}=\h{\underline{\omega}}\circ \h{I}$,
but, in general,
$\w X\neq
\h{X}\circ \h{I}$ \,because the
isotropy subgroup of $h\in \P\OO^{\WW}_{\C^2,0}$ can be different
from the isotropy subgroup of
its image in $Y$ (being a proper subgroup of the latter).
Therefore the $G$-orbits of a point in
$\P\OO^{\WW}_{\C^2,0}$
and of its image in $Y$
can be different as $G$-sets.

To compute the integral of the function
$\w X_{h}\tt^{\underline{\omega}(h)}$ over
$\P\OO^{\WW}_{\C^2,0}/G$, we shall, for each point $\h{y}\in \h{Y}$, construct a
point
$h_{\h{y}}$ in $\h{I}^{-1}(\h{y})$ so that $\w X_{h_{\h{y}}}= \w{X}(\w y) =
\h{X}(\h{y})$ and the complement
$\h{I}^{-1}(\h{y})\setminus \{h_{\h{y}}\}$ is fibred into $\C^*$ families of points
with the function $\w X_{h}$ constant along the fibres. This
implies that
$$
\int_{\h{I}^{-1}(h)} {\widetilde{X}}_h d \chi = \h{X}(\h{y})
$$
and the Fubini formula applied to the map $\h{I}$ gives (up to terms of degree
greater than $\WW$)
\begin{equation*}
\int\limits_{\P\OO_{\C^2,0}/G} \w
X_{h}\tt^{\underline{\omega}(h)} d \chi =
\int\limits_{\h{Y}} \h X(\h y)\tt^{\underline{\omega}_{\h y}} d \chi =
\int\limits_{\bb{Y}} \bb X(\bb y)\tt^{\underline{\omega}_{\bb y}} d \chi =
\prod_{\Xi} (1-\tt^{\ww_{\Xi}})^{-\chi(\Xi) [X_{\Xi}]}\; .
\end{equation*}

Let  $\widehat{y}\in\widehat{Y}$ be the orbit of
$y=\sum\limits_{i=1}^{m} x_i \in Y$ and let
$h_y :=\prod\limits_{i=1}^{m} h_{x_i}$. The isotropy
subgroup of $h_y$ in $\P\OO_{\C^2,0}$ coincides with the isotropy
subgroup of $y$ in $Y$ and therefore
$\h X_{h_{\widehat{y}}}={\widehat{X}}(\widehat{y})$
(here $h_{\widehat{y}}=p(h_{y})$ is the image of  $h_y$ in $\P\OO_{\C^2,0}/G$).

 Let $h\in I^{-1}(y)$.  The strict transforms of the curves
$\{h=0\}$ and  $\{h_y=0\}$ intersect the exceptional divisor
${\cal D}$ at the same points
with the same multiplicities. Therefore the ratio
$\psi=\dfrac{h\circ \pi}{h_y\circ \pi}$ of the liftings ${h\circ
\pi}$ and ${h_y\circ \pi}$ of the functions
$h$ and $h_y$ to the space ${\cal X}$ of the modification has
neither
zeros no poles on the exceptional divisor ${\cal D}$ and thus is
constant on it.
Therefore (multiplying $h$ by a constant) one can assume that
the ratio $\psi$ is equal to $1$ on ${\cal D}$.

Let $h_{\lambda}:= h_y+ \lambda(h-h_y)$, $\lambda\in\C^*$. One
has $\dfrac{h_{\lambda}\circ \pi}{h_y\circ \pi}=1$ on the
exceptional divisor ${\cal D}$.
Therefore $I(h_{\lambda})=I(h_{y})=y$. Moreover the isotropy
subgroup of each $h_{\lambda}$ in  $\P\OO_{\C^2,0}$ coincides with
the isotropy subgroup of $h$
and therefore $\w X_{h_{\lambda}}$ is constant. This proves the
statement.
\end{proof}

\medskip
A difference between the equation (\ref{ACampo}) and the
equations in Theorems 1  and 2
in \cite{RMC} convinced us that the equations  in \cite{RMC} contain a
mistake. Here we correct this mistake and formulate a somewhat more general statement
than Theorems 1 and 2 in \cite{RMC}. There we considered two cases: a collection of divisorial valuations and a collection
of curve valuations. Here we assume that the collection $\{\seq{\nu}1r\}$ may contain
both kinds of valuations on $\OO_{\C^2,0}$.

Let
$T=(X,\vv,\alpha)$ be  a locally finite $(G,r)$-set.
Let us define its symmetric power $S^kT$
in the following way. The $(G,r)$-set $S^k T$ is the triple
$(S^k X, \vv^{(k)}, \alpha^{(k)})$ where $S^k X$ is the $k$-th
symmetric power of the $G$-set
$X$ (with the natural $G$-action),
$\vv^{(k)}(\{\seq x1k\})= \sum_{i=1}^k \vv(x_i)$, where the
unordered collection
$\{\seq{x}1k\}$ represents a point in $S^kX$ and $\alpha^{(k)}$
is defined as above (for
symmetric powers of equipped $G$-sets). If the locally-finite $(G,r)$-set $T$ is such
that $\vv^{-1}(0)=1$ (i.e. a point), then
$ST = \bigsqcup\limits_{k=0}^{\infty} S^k T$ is a locally finite
$(G,r)$-set as well.
The $(G,r)$-set $T_{\Xi}$ defined in \cite{RMC} has this property. Thus
$ST_{\Xi}$ is defined as an element of
$K_{0}((G,r)-\mbox{sets})$.

\begin{theorem}
One has
$$
P^G_{\{\nu_i\}} = \prod_{\Xi} (ST_{\Xi} )^{\chi(\Xi)}\; .
$$
\end{theorem}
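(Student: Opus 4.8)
The strategy is to lift the $\widetilde{A}(G)$-valued computation that has just been carried out for Theorem~1 to the level of the Grothendieck ring $K_0((G,r)\text{-sets})$ of locally finite $(G,r)$-sets, where no reference to a filtration-index $\tt$ is needed and the symmetric-power operation is an honest combinatorial operation rather than a formal $\lambda$-operation. Concretely, I would reproduce the three-space picture $Y$, $\widehat Y = Y/G$, $\overline Y$ from the proof of Theorem~1, but now remembering, at each point, the whole $\Z_{\ge 0}^r$-valued function $\vv$ (the multiplicity vector $\underline{\nu}(\cdot)$ on the divisor), not just its $G$-sum $\underline{\omega}$. Thus $Y$ becomes the configuration space of effective divisors on $\stackrel{\circ}{\cal D}$ regarded as a locally finite $(G,r)$-set via the map $y=\sum x_i \mapsto \vv(y) = \sum_i \underline{\nu}(h_{x_i})$ together with the equipment $\alpha$ inherited from the curvettes; its class in $K_0((G,r)\text{-sets})$ factors as $\prod_{\sigma\in\Gamma}\bigsqcup_{k\ge0} S^k\stackrel{\circ}{E}_\sigma$, exactly as before, and passing to $G$-orbits and then to strata $\Xi$ of $\widehat{\cal D}$ yields $[Y/G] = \prod_\Xi \bigsqcup_{k\ge0} S^k\Xi = \prod_\Xi (ST_\Xi)^{\chi(\Xi)}$, where the last equality is the multiplicativity of $S(\cdot)$ under disjoint unions together with the identification of $T_\Xi$ from \cite{RMC} with the locally finite $(G,r)$-set carried by a single point of $\Xi$ (lifted by $p^{-1}$, with its multiplicity vector $\underline{\omega}_x$ or rather $\underline{\nu}(h_x)$ and equipment). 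The key point making $ST_\Xi$ well-defined is precisely the property $\vv^{-1}(0) = \text{pt}$ noted in the excerpt.

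**Relating $[Y/G]$ to $P^G_{\{\nu_i\}}$.** The second half of the proof of Theorem~1 already does all the work: the map $I:\P\OO^{\WW}_{\C^2,0}\to Y$ sending $h$ to the intersection divisor of the strict transform of $\{h=0\}$ with $\stackrel{\circ}{\cal D}$ has affine (hence Euler-characteristic-one) fibres, and the explicit section $h_{\widehat y} = \prod_i h_{x_i}$ together with the $\C^*$-pencils $h_\lambda = h_y + \lambda(h-h_y)$ shows that over each orbit $\widehat y\in\widehat Y$ the fibre $\widehat I^{-1}(\widehat y)$ contains a distinguished point whose isotropy group (and hence whose equipped $G$-orbit, and whose multiplicity vector $\underline{\nu}$) equals that of $\widehat y$, with the complement fibred into $\C^*$'s along which the equipped-$G$-set data is constant. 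Because the $(G,r)$-set structure on $\P\OO_{\C^2,0}$ underlying $P^G_{\{\nu_i\}}$ in \cite{RMC} records exactly $h\mapsto\underline{\nu}(h)$ together with the isotropy representation, this "Euler-characteristic-one with a canonical section" statement is precisely what is needed to conclude $[P^G_{\{\nu_i\}}] = [Y/G]$ in $K_0((G,r)\text{-sets})$, truncated at any $\WW$, and hence in the limit. I would present this as: the pushforward along $\widehat I$ of the virtual $(G,r)$-set underlying $P^G_{\{\nu_i\}}$ equals the virtual $(G,r)$-set underlying $Y/G$, by the Fubini-type additivity of $K_0((G,r)\text{-sets})$ (integration with respect to Euler characteristic in the locally finite $(G,r)$-set category), using that each $\widehat I$-fibre contributes its distinguished section point.

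**Main obstacle.** The genuinely delicate step is the identification of the equipped-$G$-set (equivalently, $(G,r)$-set) carried by the $G$-orbit of the canonical section point $h_{\widehat y}=\prod_i h_{x_i}$ with the symmetric-power expression $\prod_i [S^{\ell_i}\,\widetilde X_{h_{x_i}}]$, i.e. with $\overline X(\overline y)$, and the verification that the $(G,r)$-set $T_\Xi$ of \cite{RMC} really is (after the sign-of-the-blow-up corrections the paper is advertising) the class of the single-point locally finite $(G,r)$-set with multiplicity vector $\underline{\omega}_\Xi$ built from $p^{-1}(\widehat x)$. This is exactly the content of the long Remark following the definition of $S^k\widetilde X$: two products $\prod h_{x_i}$ and $\prod h_{x'_j}$ of curvette equations coincide in $\P\OO_{\C^2,0}$ iff they use the same curvettes (because the curvettes have distinct zero divisors on $\stackrel{\circ}{\cal D}$), so the orbit of such a product is canonically the corresponding symmetric power of the orbit $G h_x$, and the isotropy representation is computed cycle-by-cycle exactly as in the definition of $\alpha^{(k)}$. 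I would spell this out carefully for a single stratum $\Xi$, reducing by multiplicativity of $S(\cdot)$ to the case of one point $\widehat x\in\Xi$, and then invoke the Remark verbatim. Once that bijection-with-structure is in hand, the theorem follows by combining it with the $K_0$-level computation $[Y/G]=\prod_\Xi(ST_\Xi)^{\chi(\Xi)}$ and the $\widehat I$-pushforward identity above; applying the homomorphism $K_0((G,r)\text{-sets})\to\widetilde A(G)[[t_1,\dots,t_r]]$ recovers Theorem~1, which is a useful consistency check but not needed for the proof.
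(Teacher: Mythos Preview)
Your proposal is correct and follows essentially the same route as the paper: the paper's own proof of this theorem is literally the single sentence ``The proof is essentially the one given above,'' meaning one repeats the argument of Theorem~1 (the three spaces $Y$, $\widehat Y$, $\overline Y$, the map $I$ with affine fibres, the section $h_{\widehat y}=\prod h_{x_i}$ and the $\C^*$-pencils) while keeping track of the full $(G,r)$-set data $(\vv,\alpha)$ rather than only its image $[\widetilde X]\,\tt^{\underline\omega}$ in $\widetilde A(G)[[t_1,\dots,t_r]]$. Your elaboration of this lift, including the identification of $q^{-1}(\overline y)$ with the symmetric power via the Remark after the definition of $S^k\widetilde X$, is exactly what the paper intends.
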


The proof is essentially the one given above.

Theorems 1 and 2 from \cite{RMC} where used in \cite{Arkiv}. Thus from formal point
of view one can assume that the results in \cite{Arkiv} are not fully proved.
However
essentially we did not use the equations of Theorems 1 and 2 but only the fact that
the knowledge of the Poincar\'e series
$P^{G}_{\{\nu_i\}}\in K_0((G,r)-\mbox{sets})$
permitted to restore the numbers
$\sum\limits_{\Xi: T_{\Xi}=T} \chi(\Xi)$. In that paper this followed from the unique
factorization of the Poincar\'e series of the form
$$
P^{G}_{\{\nu_i\}} = \prod_{T}(1-T)^{-s_T}
$$
where the product is over all irreducible elements of the ring
$K_0((G,r)-\mbox{sets})$  such that $\vv(x)>0$ for all $x\in X$, $s_{T}\in\Z$.
However, in the
same way, one has the unique factorization of the form
$$
P^{G}_{\{\nu_i\}} = \prod_{T}(ST)^{s_T}
$$
where $ST=1+T+S^2T + \cdots $ and the product is over all the
irreducible elements of
$K_0((G,r)-\mbox{sets})$ with the mentioned property. This
follows from the fact
that the irreducible elements $T=(X,\vv,\alpha)$ of $K_0((G,r)-\mbox{sets})$ with
$\vv(x)>0$ can be partially ordered in such a way that
\begin{enumerate}
\item[1)] for $k > 1$,  $S^kT$ contains only irreducible
components greater than $T$;
\item[2)] each set of irreducible components has a minimal one.
\end{enumerate}
In this case if $P^{G}_{\{\nu_i\}}=1+  s T_1 +\cdots $ where $T_1$
is a minimal irreducible $(G,r)$-set in the decomposition of
$P^{G}_{\{\nu_i\}}$ with $s\neq 0$ then $P^{G}_{\{\nu_i\}}=(ST_1)^s \cdot Q$
where $Q=1+ \ldots $ (monomials not smaller than $T_1$). Then one can apply  the
factorization procedure to $Q$ and get the result. The required order can be defined,
e.g., in the following way.
For an irreducible $(G,r)$-set $T=(X,\vv,\alpha)$, let
$\ww(T):= \sum_{a\in G} \vv(a x)$ for a point $x\in X$
(one can see that $\ww(T)$ does not depend on $x$). Then we say that
$T_1<T_2$ if and only if $\ww(T_1)<\ww(T_2)$. One can see that a symmetric power of
an irreducible element $T$ (with $\ww(x)>0$) contains only components greater than
$T$.

\section{Relations with the equivariant monodromy zeta functions}\label{sec3}

In \cite{GLM}, \cite{GLM-zeta2}, there were defined two versions of the equivariant
monodromy zeta function for a $G$-invariant function germ $f$ as a series from
$(A(G)\otimes\Q) [[t]]$ (in \cite{GLM}) and from $A(G)[[t]]$ (in \cite{GLM-zeta2}).
We
shall show that in the case when $(V,0)=(\C^2,0)$ (i.e. on the plane), with some
exceptions, these monodromy
zeta functions can be restored (using a simple algorithm) from the
equivariant Poincar\'e series
$P^{G}_{\{\nu_i\}}(\tt)\in \w{A}(G)[[\seq t1r]]$
for the set $\{\nu_i\}$ of the curve valuations defined by the components of the
$G$-invariant curve $\{f=0\}$.

Let $(\C^2,0)$ be endowed with a $G$-representation and let $f:(\C^2,0)\to(\C,0)$
be a $G$-invariant function germ. Let $C=\{f=0\}$ be its zero-level curve.
Let $C=\bigcup_{i=0}^{r}C_i$ be the representation of $C$ as the union of
irreducible components (among the curves $C_i$ one can have identical ones), and let
$\nu_i$ be the curve valuation defined by the irreducible curve $C_i$. Let
$P^{G}_{\{\nu_i\}}(\seq t1r)$ be the equivariant Poincar\'e series
of the collection $\{\nu_i\}$ of valuations.

\begin{remark}
Assume that one takes one irreducible component from each orbit of the component of
the curve $C$ with the reduced structure, say, $\seq{C'}1s$.
The equivariant Poincar\'e series $P^{G}_{\{\nu_i\}}(\seq t1r)$ can be obtained from
the equivariant Poincar\'e series $P^{G}_{\{\nu'_i\}}(\seq{t'}1s)$ by substituting
each variable $t'_j$ by the product of the corresponding variables $t_i$.
\end{remark}

Let $\zeta^G_{f}(t)\in (A(G)\otimes\Q)[[t]]$ and
$\w{\zeta}^G_{f}(t)\in A(G)[[t]]$ be the monodromy zeta functions of the
$G$-invariant germ $f$ defined in \cite{GLM} and \cite{GLM-zeta2} respectively. One
cannot hope to restore the Poincar\'e series $P^{G}_{\{\nu_i\}}(\seq{t}1r)$ from the
series $\zeta^G_f(t)$ or $\w{\zeta}^g_f(t)$ since, in particular, the Poincar\'e
series  is a series in a number of variables and thus is a more fine invariant than
the zeta functions. In particular, the monodromy zeta function does not determine the
Poincar\'e series of a non-irreducible plane curve singularity in the non-equivariant
situation, i.e. $G=(e)$. (In this case the Poincar\'e series coincides with the
multi-variable Alexander polynomial which can be considered as a multi-variable
generalization of the monodromy zeta function. The multi-variable Alexander
polynomial, but not the monodromy zeta function, determines the topology of a curve
singularity: \cite{Ya}, \cite{Wall}). Therefore we shall discuss the
possibility to restore the equivariant monodromy zeta functions $\zeta^G_f(t)$ and
$\w{\zeta}^G_f(t)$ from the equivariant Poincar\'e series
$P^G_{\{\nu_i\}}(\seq t1r)$ or from its reduction
$\rho P^G_{\{\nu_i\}}(\seq t1r)\in A(G)[[\seq t1r]]$.

One can easily see that
\begin{equation}
\rho P^{G}_{\{\nu_i\}}(\seq t1r) =
\prod_{\Xi} (1-\tt^{\underline{\omega}_{\Xi}})^{-\chi(\Xi) [G/H_{\Xi}]}
\end{equation}
where $H_{\Xi}=G_x$ for $x\in p^{-1}(\Xi)$
(since $\rho [\w{X}_{\Xi}]=[G/H_{\Xi}]\in A(G)$).

The equivariant zeta functions can be given in terms of a $G$-resolution
$\pi: (\XX,\DD)\to (\C^2,0)$ of the curve $C$. Let $\{\Xi\}$ be the stratification of
the space $\h{\DD}=\oD/G$ described in Section \ref{sec2}.
For $x\in \oD$, let
\begin{equation}\label{**}
n_x = \sum_{i=1}^r \nu_i(h_x)=\abs{\mm_x}\; .
\end{equation}
The isotropy
subgroup
$G_x$ of the point $x$ acts on the $G_x$-invariant normal slice to $\oD$ at the point
$x$ (in fact the strict transform of the $G_x$-invariant curvette $L_x$
corresponding to $x$). Let $\h{G}_x\subset G_x$ be the isotropy subgroup of a point
of this slice different from $x$.

The integer $n_x$ and the conjugancy class of the pair $(G_x,\h{G}_x)$ are the same
for all the points $x$ from the preimage of a stratum $\Xi$ with respect to the
factorization map $p:\oD\to\oD/G$. Therefore let us denote them by $n_{\Xi}$ and
$(H_{\Xi}, \h{H}_{\Xi})$ respectively.

Then one has (\cite{GLM}, \cite{GLM-zeta2}):
\begin{equation*}
\begin{aligned}
\zeta^G_f (t) &= \prod_{\Xi}
\left(
1-t^{n_{\Xi}}
\right)^{-\frac{\abs{\h{H}_{\Xi}}\chi(\Xi)}{\abs{H_{\Xi}}} [G/\h{H}_{\Xi}]}
\; , \\
\w{\zeta}^G_f (t) &= \prod_{\Xi}
\left( 1-t^{n_{\Xi}\frac{\abs{\h{H}_{\Xi}}}{\abs{H_{\Xi}}}
}\right)^{-\chi(\Xi)[G/\h{H}_{\Xi}]}\; .
\end{aligned}
\end{equation*}

Let us first assume that the action of $G$ on $\C^2\setminus \{0\}$ is free. In this
case $\h{H}_{\Xi}=(e)$ for any $\Xi$. Therefore one has
\begin{equation*}
\begin{aligned}
\zeta^G_f (t) &= \prod_{\Xi}
\left(1-t^{n_{\Xi}}\right)^{-\frac{\chi(\Xi)}{\abs{H_{\Xi}}}
[G/(e)]}
\; , \\
\w{\zeta}^G_f (t) &= \prod_{\Xi}
\left(1-t^{\frac{n_{\Xi}}{\abs{H_{\Xi}}}}\right)^{-\chi(\Xi)[G/(e)]}\; .
\end{aligned}
\end{equation*}
The equations (\ref{2}) and (\ref{**}) imply that
$n_{\Xi}=\frac{\abs{\ww_{\Xi}}}{\abs{G}}$, Therefore the equivariant monodromy zeta
functions $\zeta^G_f(t)$ and
$\w{\zeta}^G_f(t)$ can be restored from the A'Campo type decomposition of the series
$\rho P^{G}_{\{\nu_i\}}(\seq{t}{}{})$ in the obvious way. Thus we have the following
statement.

\begin{proposition}
If the $G$-action on $\C^2\setminus \{0\}$ is free, the reduced equivariant
Poincar\'e series
$\rho P^{G}_{\{\nu_i\}}(\seq{t}{1}{r})$ determines the equivariant zeta functions
$\zeta^G_f(f)$ and
$\w\zeta^G_f(f)$.
\end{proposition}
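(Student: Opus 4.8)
The plan is to observe that, in the free case, both equivariant zeta functions are built from exactly the same numerical stratum data that, after grouping, is encoded in the reduced equivariant Poincar\'e series, and then to show that this data can be read off from $\rho P^G_{\{\nu_i\}}(\tt)$ by a unique-factorization argument. Throughout I fix one $G$-resolution $\pi:(\XX,\DD)\to(\C^2,0)$ of $C$ that is good enough for the A'Campo formula (\ref{ACampo}) and that also serves as a $G$-resolution for the zeta functions; one passes to such a resolution by additional $G$-invariant blow-ups of intersection points, which only add strata $\Xi$ with $\chi(\Xi)=0$ and hence change none of the three series. All of them are then written in terms of the common finite stratification $\{\Xi\}$ of $\h{\DD}=\oD/G$.

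The key step is to recover, from $\rho P^G_{\{\nu_i\}}(\tt)$ alone, the integers
$$
s_{\ww,[H]}:=\sum_{\Xi:\ \underline{\omega}_{\Xi}=\ww,\ [H_{\Xi}]=[H]}\chi(\Xi),\qquad \ww\in\Z_{\ge 0}^r\setminus\{0\},\ \ [H]\in\mbox{Conjsub\,}G
$$
(the weights $\underline{\omega}_{\Xi}$ are all nonzero). Applying $\rho$ to (\ref{ACampo}) and grouping the factors by the pair $(\ww,[H])$ — legitimate because the pre-$\lambda$-structure makes the exponent in $(1-\tt^{\ww})^{-(\cdot)}$ an additive parameter — rewrites the series as $\rho P^G_{\{\nu_i\}}(\tt)=\prod_{(\ww,[H])}(1-\tt^{\ww})^{-s_{\ww,[H]}\,[G/H]}$. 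I would then run the unique-factorization argument from the discussion after Theorem~2, now inside $A(G)[[\seq t1r]]$: order the building blocks $(1-\tt^{\ww})^{-[G/H]}$ by the $\ell^1$-norm $|\ww|$; the coefficient of $\tt^{\ww}$ in such a product equals $\sum_{[H]}s_{\ww,[H]}[G/H]$ plus a quantity that depends only on the $s_{\ww',[H']}$ with $|\ww'|<|\ww|$; since $\{[G/H]\}_{[H]\in\mbox{Conjsub\,}G}$ is a $\Z$-basis of $A(G)$, induction on $|\ww|$ determines every $s_{\ww,[H]}$.

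Finally I would reassemble the zeta functions. Because the $G$-action on $\C^2\setminus\{0\}$ is free, $\h{H}_{\Xi}=(e)$ for every stratum, so the formulas recalled just before the proposition give $\zeta^G_f(t)=\prod_{\Xi}(1-t^{n_{\Xi}})^{-\frac{\chi(\Xi)}{|H_{\Xi}|}[G/(e)]}$ and $\widetilde\zeta^G_f(t)=\prod_{\Xi}(1-t^{n_{\Xi}/|H_{\Xi}|})^{-\chi(\Xi)[G/(e)]}$, while $n_{\Xi}=|\underline{\omega}_{\Xi}|/|G|$ by (\ref{2}) and (\ref{**}). Grouping these products by $(\ww,[H])$ as well — inside a group $n_{\Xi}=|\ww|/|G|$ and $|H_{\Xi}|=|H|$ are constant — yields
$$
\zeta^G_f(t)=\prod_{(\ww,[H])}\bigl(1-t^{|\ww|/|G|}\bigr)^{-\frac{s_{\ww,[H]}}{|H|}[G/(e)]},\qquad
\widetilde\zeta^G_f(t)=\prod_{(\ww,[H])}\bigl(1-t^{|\ww|/(|G|\,|H|)}\bigr)^{-s_{\ww,[H]}[G/(e)]},
$$
expressions involving only the already-recovered integers $s_{\ww,[H]}$ and the fixed group $G$. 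Hence $\rho P^G_{\{\nu_i\}}(\tt)$ determines $\zeta^G_f(t)$ and $\widetilde\zeta^G_f(t)$, and the last two displays are the promised simple algorithm.

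The step I expect to carry the real weight is the uniqueness of the factorization of $\rho P^G_{\{\nu_i\}}(\tt)$ into the blocks $(1-\tt^{\ww})^{-[G/H]}$. The point to watch is that the reduction $\w A(G)\to A(G)$ forgets the one-dimensional representations attached to points, so one must check that the ordering of building blocks and the linear-independence input used in the inductive recovery survive the reduction; they do — the order can be taken to depend only on $|\ww|$, and $\{[G/H]\}$ remains a basis of $A(G)$ — but this is precisely the spot where the corrected form of the A'Campo formula (the inaccuracy of \cite{RMC} noted earlier) enters, so I would spell this verification out rather than merely quote it.
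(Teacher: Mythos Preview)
Your proof is correct and follows essentially the same approach as the paper: reduce the A'Campo formula via $\rho$, observe that $\h{H}_{\Xi}=(e)$ in the free case so that both zeta functions depend only on $n_{\Xi}$, $|H_{\Xi}|$, $\chi(\Xi)$, use $n_{\Xi}=|\underline{\omega}_{\Xi}|/|G|$, and recover the needed stratum data from the binomial factorization of $\rho P^G_{\{\nu_i\}}(\tt)$. The paper's own argument is the terse paragraph immediately preceding the statement together with the Remark just after it; you have simply made explicit the unique-factorization step (induction on $|\ww|$ and the $\Z$-basis $\{[G/H]\}$ of $A(G)$) that the paper leaves implicit there.
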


\begin{remark}
The algorithm to restore the monodromy zeta functions from the reduced equivariant
Poincar\'e series
$\rho P^{G}_{\{\nu_i\}}(\seq{t}{1}{r})$ includes the factorization of it into
the product of the binomials of the form $(1-\tt^{\ww})^{-s_{[H],\ww} [G/H]}$.
Without this factorization an algorithm is not clear.
\end{remark}

For an arbitrary faithful action of a finite group $G$ on $\C^2$, the equivariant
monodromy zeta functions $\zeta^{G}_{f}(G)$
and $\w{\zeta}^{G}_{f}(G)$ contains some factors of the form
$(1-t^k)^{-s [G/(e)]}$ and also factors of the form
$(1-t^k)^{-[G/H]}$ corresponding to lines consisting of points
with a non-trivial isotropy subgroup $H$. (We shall call these
lines {\it exceptional} ones.) If the representation of $G$ on
$\C^2$
is known, one knows all these possible factors and the problem to restore
the monodromy zeta functions from the equivariant Poincar\'e
series  is somewhat simpler. Therefore we shall not assume the
representation of $G$ on $\C^2$ to be known.

One can see that to make it possible to restore the equivariant
zeta functions from the equivariant Poincar\'e series
$P^{G}_{\{\nu_i\}}(\tt)$ one has to exclude certain cases (cf.
with Theorem 3 in \cite{Arkiv}).

\begin{example}
Let us consider two actions of the cyclic group $\Z_{6}$ on
$\C^2$ by representations
$\sigma*_1(x,y) = (\sigma^2 x , \sigma y)$ and
$\sigma*_2(x,y) = (\sigma^3 x , \sigma y)$ respectively
($\sigma = \exp(2\pi i /6)$ is the generator of $\Z_6$) and let
$C$ be the curve $\{x^6=0\}$. One has
$P^{\Z_6}_{\{\nu_i\}}(\tt)= (1-t_1\cdot\ldots\cdot
t_{6})^{-[\Z_6/\Z_6]_{\sigma}}$, where $\sigma$ is the
one-dimensional representation defined by $\sigma*z=\sigma z$.
Moreover the equivariant zeta functions
$\zeta^{G}_f(t)$  and $\w\zeta^{G}_f(t)$ are different in these
two cases, they are of the form
$(1-t^k)^{- s [\Z_6/\Z_2]}$ and to
$(1-t^k)^{- s [\Z_6/\Z_3]}$, $s\neq 0$, respectively. Thus they
are not determined by the equivariant Poincar\'e series
$P^{\Z_6}_{\{\nu_i\}}(\tt)$.
\end{example}

\begin{theorem}
Let $\C^2$ be endowed with a faithful action (a representation)
of a finite group $G$ and let $C=\{f=0\} = \bigcup_{i=1}^r C_i$,
where some of the components $C_i$ may coincide, be the zero
level curve of a $G$-invariant function germ
$f:(\C^2,0)\to(\C,0)$.
Assume that the curve $C$ does not contain a smooth curve
invariant with respect to a non-trivial element of $G$ whose
action on $\C^2$ is not a scalar one.
Let $\{\nu_i\}_{i=1}^{r}$ be the corresponding collection of
valuations.
Then the $G$-equivariant Poincar\'e series
$P^{G}_{\{\nu_i\}}(\seq t1r)$ determines the equivariant zeta
functions
$\zeta^{G}_f(t)$  and $\w\zeta^{G}_f(t)$.
\end{theorem}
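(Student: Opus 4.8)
The plan is to follow the strategy of Proposition~1: read off from the A'Campo type product for $P^G_{\{\nu_i\}}(\tt)$ enough information about the strata $\Xi$ of $\h{\cal D}=\oD/G$ to rebuild the products for $\zeta^G_f(t)$ and $\w{\zeta}^G_f(t)$ displayed above. The first point is a unique factorization statement in $\w{A}(G)[[\seq t1r]]$: a series of the form $\prod (1-\tt^{\ww})^{-s_{[H],\alpha,\ww}[G/H]_{\alpha}}$, the product being taken over all $[H]\in\mbox{Conjsub\,}G$, all one-dimensional representations $\alpha$ of $H$ and all $\ww\in\Z_{\ge0}^r\setminus\{0\}$, is determined by its coefficients, and one can recover the integers $s_{[H],\alpha,\ww}$. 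This is the factorization already exploited (in $K_0((G,r)-\mbox{sets})$) in the discussion after Theorem~2 and in \cite{Arkiv}: one orders the irreducible classes $[G/H]_{\alpha}\tt^{\ww}$ by $\ww$, notes that a symmetric power of such a class involves only classes of strictly larger weight, and strips off the factor attached to a minimal class step by step. Applying this to $P^G_{\{\nu_i\}}(\tt)$ evaluated at $t_1=\dots=t_r=t$, and using $n_{\Xi}=\abs{\underline{\omega}_{\Xi}}/\abs{G}$ (a consequence of (\ref{2}) and (\ref{**}), as in the free case), one recovers for every $n\in\Z_{\ge0}$, every $[H]$ and every $\alpha$ the integer $s_{n,[H],\alpha}:=\sum_{\Xi}\chi(\Xi)$, the sum over the strata $\Xi$ with $n_{\Xi}=n$, with $H_{\Xi}$ conjugate to $H$, and with associated representation $\alpha_{\Xi}=\alpha$.

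The products for the zeta functions involve, in addition, the subgroups $\h{H}_{\Xi}\subseteq H_{\Xi}$ — equivalently the character $\rho_{\Xi}$ by which $H_{\Xi}$ acts on the normal curvette direction at a point $x\in p^{-1}(\Xi)$, so that $\h{H}_{\Xi}=\Ker\rho_{\Xi}$. The heart of the proof is the claim that, under the hypothesis on $C$, the conjugacy class of $\h{H}_{\Xi}$ is a function of the data $(H_{\Xi},\alpha_{\Xi})$ and that this function is computable from the numbers $s_{n,[H],\alpha}$ of the previous step. Here one splits the strata into two kinds. If $p^{-1}(\Xi)$ does not meet the strict transform of any eigenline (``exceptional line'') of a non-scalar element of $G$ — in particular if $\Xi$ is open in a component $E_{\sigma}$ of $\cal D$ — then faithfulness of the action forces $\h{H}_{\Xi}=(e)$. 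If $p^{-1}(\Xi)$ does lie on the strict transform of an exceptional line $\ell=\mbox{Fix}(g)$ of a non-scalar $g\neq e$, then $H_{\Xi}$ stabilises $\ell$ and $\h{H}_{\Xi}=\{a\in H_{\Xi}:a|_{\ell}=\mbox{id}\}$; comparing the adjunction formula $K_{\cal X}=\pi^{*}K_{\C^2}+\sum_{\sigma}c_{\sigma}E_{\sigma}$ with the definition of $\alpha_{\Xi}$ yields, on the component $E_{\sigma}\ni x$, the identity of characters of $H_{\Xi}$
$$
\alpha_{\Xi}(a)=\det(a)^{-1}\,\rho_{\Xi}(a)^{\,m_{\sigma\sigma}-c_{\sigma}-1}
$$
(with $m_{\sigma\sigma}-c_{\sigma}-1=-1$ when $E_{\sigma}$ is the first blow-up). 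It remains to determine the restriction of the determinant character of the representation $G\to\mbox{GL}(\C^2)$ to each occurring isotropy group $H_{\Xi}$; this is where the hypothesis enters. The condition that $C$ contains no smooth curve invariant under a non-trivial non-scalar element of $G$ is precisely what excludes the phenomenon of the Example above: it guarantees that the collection $\{s_{n,[H],\alpha}\}$, together with the identity above, pins down $\det$ on each relevant $H_{\Xi}$ (roughly, the forbidden configurations are those in which non-conjugate representations of $G$ produce identical data $\{s_{n,[H],\alpha}\}$ but different determinant characters), hence determines $\rho_{\Xi}$ and $\h{H}_{\Xi}$.

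Once the assignment $(H_{\Xi},\alpha_{\Xi},n_{\Xi})\mapsto(\h{H}_{\Xi},n_{\Xi})$ is known, substitute it into the $s_{n,[H],\alpha}$, form for each pair $(n,[\h{H}])$ the sums $\sum_{\Xi}\frac{\abs{\h{H}_{\Xi}}\chi(\Xi)}{\abs{H_{\Xi}}}$ and $\sum_{\Xi}\chi(\Xi)$ over the strata with those values, and read off $\zeta^G_f(t)$ and $\w{\zeta}^G_f(t)$ from their product formulas; additivity of the Euler characteristic over strata and the fact that $\h{H}_{\Xi}$ depends only on data visible in $P^G_{\{\nu_i\}}$ preclude any loss of information through cancellation between strata. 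The main obstacle is the second step: showing that the stated hypothesis on $C$ rules out exactly the ambiguous cases, so that the restriction of the determinant character to each isotropy subgroup occurring along a $G$-resolution — and therefore each $\h{H}_{\Xi}$ — is forced by the equivariant Poincar\'e series. This is a geometric analysis of the isotropy structure of a $G$-resolution, parallel to, and repairing the use in \cite{Arkiv} of, the formulas of \cite{RMC}.
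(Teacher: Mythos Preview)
Your first step --- the unique factorization of $P^G_{\{\nu_i\}}(\tt)$ into binomials $(1-\tt^{\ww})^{-s_{[H],\alpha,\ww}[G/H]_{\alpha}}$ and the identification $n_{\Xi}=\abs{\underline{\omega}_{\Xi}}/\abs{G}$ --- matches the paper and is fine.

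The gap is in your second step. You propose to recover $\h{H}_{\Xi}=\Ker\rho_{\Xi}$ from $(H_{\Xi},\alpha_{\Xi})$ via an adjunction-type identity
\[
\alpha_{\Xi}(a)=\det(a)^{-1}\rho_{\Xi}(a)^{m_{\sigma\sigma}-c_{\sigma}-1},
\]
but this is not actually carried out: (i) the identity is asserted, not derived (and a direct check on the first exceptional divisor, where $m_{\sigma\sigma}=c_{\sigma}=1$, does not give the stated relation); (ii) even granting a correct formula of this shape, the exponent $m_{\sigma\sigma}-c_{\sigma}-1$ depends on which component $E_{\sigma}$ carries the stratum, data that is \emph{not} encoded in the triple $(n,[H],\alpha)$ you have extracted; (iii) the crucial ingredient $\det\vert_{H_{\Xi}}$ is declared recoverable ``because the hypothesis rules out the ambiguous cases'', but no mechanism is given --- this is precisely the point that needs proof, and you explicitly flag it as the ``main obstacle'' without resolving it.

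The paper's argument is different and avoids all of this. Rather than trying to compute $\rho_{\Xi}$ from $\alpha_{\Xi}$, it locates the exceptional-line strata \emph{directly} in the factorization by a minimality argument. For $G$ abelian (the general case is analogous with $N_G(H)$ in place of $G$): a factor of the form $(1-\tt^{\ww})^{-[G/G]_{\alpha}}$ with minimal $\ww$ among those with positive exponent must come from a fixed point on the first exceptional divisor, i.e.\ from one coordinate axis; the minimal factor with a \emph{different} representation $\beta\neq\alpha$ then corresponds to the other axis. This hands you \emph{both} eigencharacters $\alpha,\beta$ of the $G$-representation on $\C^2$, and $\h{H}_{\Xi}$ for the first axis is simply $\Ker\beta$ (and $\Ker\alpha$ for the second). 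The hypothesis on $C$ is used only to ensure the exceptional lines are not components of $C$, so that they do appear as strata of $\oD/G$ and produce these identifiable factors. No determinant character, no intersection numbers $m_{\sigma\sigma}$, no discrepancies $c_{\sigma}$ are needed: one reads $\alpha$ and $\beta$ straight from the product and substitutes as in the displayed formulas.
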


\begin{proof}
Let
$$
P^{G}_{\{\nu_i\}}(\tt) = \prod_{\ww, [H], \alpha}
(1-\tt^{\ww})^{- s_{[H], \ww, \alpha} [G/H]_{\alpha}}
$$
be the A'Campo type factorization of the equivariant Poincar\'e
series (here $[H]$ runs over the conjugancy classes of subgroups
of $G$, $\alpha$ is a one-dimensional representation of $H$).
In order to restore the zeta functions
$\zeta^{G}_f(t)$  and $\w\zeta^{G}_f(t)$ one has to ``localize"
the factors corresponding to the exceptional lines and to
determine the action of the corresponding isotropy subgroups
(the subgroups preserving the lines) on them.

First we shall consider the case of an Abelian group $G$. This
makes the idea of the proof more transparent and permits to
describe the general case in a shorter way.

A representation of an Abelian group $G$ splits into the direct
sum of two one-dimensional representations. The action of $G$ can
be not free on some of the corresponding coordinate lines (on
non of them, or on one of them, or on both of them). If the
action on a coordinate line is not free, this is an exceptional
one. According to the assumption of the Theorem these exceptional
lines are not contained in the curve $C$. In the equation
\ref{ACampo} these exceptional lines represent a zero-dimensional
strata of the stratification $\{\Xi\}$ and provide to
$P^{G}_{\{\nu_i\}}(\tt)$ factors of the form
$(1-\tt^{\ww})^{-[G/G]_{\alpha}}$, where $\alpha$ is a
one-dimensional representation of $G$. Moreover , this factor has
a minimal exponent $\ww$ among the factors of the form
$(1-\tt^{\ww'})^{-s_{G,\ww',\alpha} [G/G]_{\alpha}}$ with
positive $s_{G,\ww',\alpha}$. This gives the factor corresponding
to one of the axis. Let us consider factors of the form
$(1-\tt^{\ww'})^{-s [G/G]_{\beta}}$ with $\beta\neq \alpha$. If
such factors do not exist, the action of $G$ is a scalar one
(i.e. an element $a\in G$ acts on $\C^2$ by multiplication by
$\alpha(a)$). In this case the action of $G$ on
$\C^2\setminus\{0\}$ is free and thus the equivariant zeta
functions
$\zeta^{G}_f(t)$  and $\w\zeta^{G}_f(t)$ are restored from the
equivariant Poincar\'e series as above (in Proposition 1). If
factors of the form
$(1-\tt^{\ww'})^{-s [G/G]_{\beta}}$ with $\beta\neq \alpha$ exist,
then the minimal among them with $s$ positive (in fact with
$s=1$) corresponds to the other axis.
Moreover, the representation of $G$ on $\C^2$ is the direct sum
of the one-dimensional representations $\alpha$ and $\beta$.
The appearance of the corresponding factors in the equivariant
zeta functions depends on the representations $\alpha$ and
$\beta$. (If both of them are exact, the $G$-action on
$\C^2\setminus\{0\}$ is free and the way to restore the
equivariant zeta functions is as in Proposition 1.)

In general, to get the equivariant zeta function
$\zeta^{G}_f(t)$  from the equivariant
Poincar\'e series, one has to substitute the described factors
$(1-\tt^{\ww_1})^{-[G/G]_{\alpha}}$ and
$(1-\tt^{\ww_2})^{-[G/G]_{\beta}}$ by
$$
\left(1-t
^{\frac{\abs{\ww_1}}{\abs{G}}}\right)^{- \frac{\abs{\Ker
\beta}}{\abs G} [G/\Ker \beta]}
\qquad \mbox{and} \qquad
\left(1-t^{\frac{\abs{\ww_2}}{\abs{G}}}\right)^{- \frac{\abs{\Ker
\alpha}}{\abs G} [G/\Ker \alpha]}
$$
respectively. (Note that in some sense the representations
$\alpha$ and $\beta$ exchange their positions.) All other factors
have to be modified as in Proposition 1.

To get the equivariant zeta function
$\w \zeta^{G}_f(t)$,
one has to substitute these two factors by
$$
\left(1-t
^{\frac{\abs{\ww_1}\abs{\Ker
\beta}}{\abs{G}^2}}\right)^{
- [G/\Ker \beta]}
\qquad \mbox{and} \qquad
\left(1-t
^{\frac{\abs{\ww_2}\abs{\Ker
\alpha}}{\abs{G}^2}}\right)^{
- [G/\Ker \alpha]}
$$
respectively.

Now let $G$ be an arbitrary finite group acting faithfully on
$\C^2$. An exceptional line is the set of fixed points of an
Abelian subgroup $H\subset G$, $H\neq (e)$. The elements of the
normalizer $N_G(H)$ preserve the line. All other elements of $G$
send the line to other exceptional ones. The normalizer $N_G(H)$
is an Abelian group. Its representation on $\C^2$ splits into two
different one-dimensional representations. (If these two
representations coincide, the action of $N_G(H)$ is a scalar one
and thus $N_G(H)$, and therefore $H$, act freely on
$\C^2\setminus\{0\}$.) The corresponding coordinate lines provide
into the equation (\ref{ACampo}) the factors of the form
$(1-\tt^{\ww_1})^{-[G/N_G(H)]_{\alpha}}$ and
$(1-\tt^{\ww_2})^{-[G/N_G(H)]_{\beta}}$ which are localized in
the same way as above (for an Abelian $G$). This means that one
of them has a minimal exponent $\ww'$ among the factors of the
form
$(1-\tt^{\ww'})^{- s[G/N_G(H)]_{\gamma}}$ with positive $s$ and
the other one has a minimal exponent $\ww'$ among the similar
factors with a different representation.

Now, to get the equivariant zeta function
$\zeta^{G}_f(t)$  from the equivariant
Poincar\'e series, one has to substitute the obtained
factors
$(1-\tt^{\ww_1})^{-[G/N_G(H)]_{\alpha}}$ and
$(1-\tt^{\ww_2})^{-[G/N_G(H)]_{\beta}}$ by
$$
\left(1-t
^{\frac{\abs{\ww_1}}{\abs{G}}}\right)^{- \frac{\abs{\Ker
\beta}}{\abs{N_G(H)}} [G/\Ker \beta]}
\qquad \mbox{and} \qquad
\left(1-t^{\frac{\abs{\ww_2}}{\abs{G}}}\right)^{- \frac{\abs{\Ker
\alpha}}{\abs{N_G(H)}} [G/\Ker \alpha]}
$$
respectively.

To get the equivariant zeta function
$\w \zeta^{G}_f(t)$,
one has to substitute them by
$$
\left(1-t
^{\frac{\abs{\ww_1}\abs{\Ker
\beta}}{\abs{G}\abs{N_G(H)}}}\right)^{-
[G/\Ker \beta]}
\qquad \mbox{and} \qquad
\left(1-t^{\frac{\abs{\ww_2}\abs{\Ker
\alpha}}{\abs{G}\abs{N_G(H)}}}\right)^{-
[G/\Ker \alpha]}
$$
respectively.
\end{proof}

Addresses:

A. Campillo and F. Delgado:
IMUVA (Instituto de Investigaci\'on en
Matem\'aticas), Universidad de Valladolid.
Paseo de Bel\'en, 7. 47011 Valladolid, Spain.
\newline E-mail: campillo\symbol{'100}agt.uva.es, fdelgado\symbol{'100}agt.uva.es

S.M. Gusein-Zade:
Moscow State University, Faculty of Mathematics and Mechanics, Moscow, GSP-1, 119991, Russia.
\newline E-mail: sabir\symbol{'100}mccme.ru

\end{document}